\theoremstyle{definition} 
\theoremstyle{plain} 
\theoremstyle{remark} 
\def\listofalgorithms{\@starttoc{loa}\listalgorithmname}
\def\l@algorithm{\@tocline{0}{3pt plus2pt}{0pt}{1.9em}{}}
\renewcommand{\ALG@name}{Algorithm}
\renewcommand{\listalgorithmname}{List of \ALG@name s}
\numberwithin{algorithm}{section}
\theoremstyle{plain}
\numberwithin{equation}{section}
\newtheorem{dfn}{Definition}[section]
\newtheorem{lem}[dfn]{Lemma}
\newtheorem{thm}[dfn]{Theorem}
\newtheorem{alg}[dfn]{Algorithm}
\theoremstyle{definition}
\newtheorem{asm}[dfn]{Assumption}
\newtheorem{exm}[dfn]{Example}
\newcommand{\norm}[1]{\left\lVert#1\right\rVert}
\DeclareOldFontCommand{\rm}{\normalfont\rmfamily}{\mathrm}
\DeclareOldFontCommand{\sf}{\normalfont\sffamily}{\mathsf}
\DeclareOldFontCommand{\tt}{\normalfont\ttfamily}{\mathtt}
\DeclareOldFontCommand{\bf}{\normalfont\bfseries}{\mathbf}
\DeclareOldFontCommand{\it}{\normalfont\itshape}{\mathit}
\DeclareOldFontCommand{\sl}{\normalfont\slshape}{\@nomath\sl}
\DeclareOldFontCommand{\sc}{\normalfont\scshape}{\@nomath\sc}
\def\@seccntformat#1{\@ifundefined{#1@cntformat}%
   {\csname the#1\endcsname\space}
   {\csname #1@cntformat\endcsname}}
\newcommand\section@cntformat{\thesection.\space}       
\newcommand\subsection@cntformat{\thesubsection.\space} 
\title{\normalfont\spacedallcaps{Gauss-Newton-type methods}\\\spacedallcaps{for bilevel optimization}} 
\author{J\"{o}rg Fliege$^{\natural}$, Andrey Tin$^{\dag}$, and Alain Zemkoho$^{\ddag}$\\
{\small{\emph{School of Mathematical Sciences, University of Southampton, SO17 1BJ Southampton, UK}}}}
\date{\today}
\begin{document}
\renewcommand{\sectionmark}[1]{\markright{\spacedlowsmallcaps{#1}}} 
\lehead{\mbox{\llap{\small\thepage\kern1em\color{halfgray} \vline}\color{halfgray}\hspace{0.5em}\rightmark\hfil}} 

\pagestyle{scrheadings}
\maketitle
\setcounter{tocdepth}{2}
\section*{Abstract}
\noindent
This article studies Gauss-Newton-type methods for over-determined systems to find solutions to bilevel programming problems.
To proceed, we use the lower-level value function reformulation of bilevel programs and consider necessary optimality conditions under appropriate assumptions. First under strict complementarity for upper- and lower-level feasibility constraints, we prove the convergence of a Gauss-Newton-type method in computing points satisfying these optimality conditions under additional tractable qualification conditions. Potential approaches to address the shortcomings of the method are then proposed, leading to alternatives such as the pseudo or smoothing Gauss-Newton-type methods for bilevel optimization. Our numerical experiments conducted on 124 examples from the recently released Bilevel Optimization LIBrary (BOLIB) compare the performance of our method under different scenarios and show that it is a tractable approach to solve bilevel optimization problems with continuous variables.

\let\thefootnote\relax\footnotetext{$\natural$ \textit{e-mail: \url{j.fliege@soton.ac.uk}.}}

\let\thefootnote\relax\footnotetext{$\dag$ \textit{e-mail: \url{a.tin@soton.ac.uk}.}}

\let\thefootnote\relax\footnotetext{$\ddag$ \textit{e-mail: \url{a.b.zemkoho@soton.ac.uk}.}}

\section{Introduction}\label{SecIntroduction}
We aim to solve the bilevel programming problem
\begin{equation}\label{initialbilev}
\underset{x, \, y}{\min \;}  F(x,y) \; \mbox{ s.t. } \; G(x, y) \leq 0, \;  y\in S(x):= \arg\underset{y}\min~\{f(x,y):\; g(x,y) \leq 0\},\\
\end{equation}
where $F:\mathds{R}^n \times \mathds{R}^m \rightarrow \mathds{R}$, $f:\mathds{R}^n \times \mathds{R}^m \rightarrow \mathds{R}$, $G:\mathds{R}^n \times \mathds{R}^m \rightarrow \mathds{R}^q$, and $g:\mathds{R}^n \times \mathds{R}^m \rightarrow \mathds{R}^p$.
As usual, we refer to $F$ (resp. $f$) as upper-level (resp. lower-level) objective function and $G$ (resp. $g$) stands for upper-level (resp. lower-level) constraint function. Solving problem \eqref{initialbilev} is very difficult because of the implicit nature of the lower-level optimal solution mapping $S : \mathds{R}^n \rightrightarrows \mathds{R}^m$ defined in \eqref{initialbilev}.

There are several ways to deal with the complex nature of problem \eqref{initialbilev}.
One popular technique is to replace the lower-level problem with its Karush-Kuhn-Tucker (KKT) conditions.
With this formulation, bilevel programming problems are strongly linked to MPECs (mathematical programs with equilibrium constraints), see, e.g., \cite{bilevelmpec10}, which are not necessarily easy to handle due in part to the extra variables representing the lower-level Lagrangian multipliers.
Interested readers are referred to \cite{Allendestill12, bilevelreform, Yanchong13}
and references therein, for results and methods based on this transformation. In this paper, we are going to use the lower-level value function reformulation (LLVF)
\begin{equation}\label{initialvalfuncform0}
\underset{x,\,y}\min~F(x,y) \;\mbox{ s.t. } \; G(x,y)\leq 0, \;\, g(x,y)\leq 0, \;\, f(x,y)\leq \varphi(x),
\end{equation}
where the optimal value function is defined by
\begin{equation}\label{LLVF}
  \varphi(x) := \inf~\left\{f(x,y)\left|~g(x,y)\leq 0\right.\right\},
\end{equation}
to transform problem \eqref{initialbilev} into a single-level optimization problem. As illustrated in \cite{newtonbilevel18}, this approach can provide tractable opportunities to develop second order algorithms for the bilevel optimization problem, as it does not involve first order derivatives for lower-level problem, as in the context of the KKT reformulation.

There are recent studies on solution methods for bilevel programs, based on the LLVF reformulation. 
For example, \cite{polyxeni141,polyxeni142,mitsos08,paulavicius17,wieseman13} develop global optimisation techniques for \eqref{initialbilev} based on \eqref{initialvalfuncform0}--\eqref{LLVF}. \cite{lin14,xu14,xu15} propose algorithms computing stationary points for \eqref{initialvalfuncform0}--\eqref{LLVF}, in the case where the upper-level and lower-level feasible sets do not depend on the lower-level and upper-level variable, respectively. \cite{newtonbilevel18} is the first paper to propose a Newton-type method for the LLVF reformulation for programs. Numerical results there show that the approach can be very successful. The system of equation build there is square while our method in this paper is based a non-square and overdetermined system of equations. Hence, the need to develop Gauss-Newton-type techniques to capture certain classes of bilevel optimization stationarity points.

One of the main problems in solving \eqref{initialvalfuncform0} is that its feasible points systematically fail many constraint qualifications (see, e.g., \cite{dempezemkoho1}). To deal with this issue, we will use the partial calmness condition \cite{optcondbil95}, to shift the value function constraint $f(x,y)\leq \varphi(x)$ to the upper-level objective function, as a penalty term with parameter $\lambda$.
The other major problem with the LLVF reformulation is that $\varphi$ is typically non-differentiable. This will be handled by using upper estimates of the subdifferential of the function; see, e.g., \cite{dempezemkoho1,newoptcond,bilevelreform,optcondbil95}.
Our Gauss-Newton-type scheme proposed in this paper is based on a relatively simple system of optimality conditions, which depends on $\lambda$.

To transform this optimality conditions into a system of equations, we substitute the corresponding complementarity conditions by the standard Fischer-Burmeister function \cite{fischer92}.  To deal with the non-differentiability of the Fischer-Burmeister function, we consider two approaches in this paper. The first one is to assume strict complementarity for the constraints involved in the upper- and lower-level feasible sets. As second option to avoid non-differentiability, we investigate a smoothing technique by adding a perturbation in the Fischer-Burmeister function. 

Another important aspect of the aforementioned system of equations is that it is \emph{overdetermined}.
Since overdetermined systems have non-square Jacobian, we cannot use a classical Newton-type method as in \cite{newtonbilevel18}.
Gauss-Newton and Newton-type methods with Moore-Penrose pseudo inverse are both introduced in Section \ref{SecNewton}.
It will be shown that these methods are well-defined for solving bilevel programs from the perspective of the LLVF reformulation \eqref{initialvalfuncform0}.
In particular, our framework ensuring that the Gauss-Newton method for bilevel optimization is well-defined, does not require any assumption on the lower-level objective function.
A strong link between these two methods is then discussed to show that they should perform very similarly for most problems.
Based on this relationship, we also expect the Newton method with pseudo inverse to be more robust, the evidence of which we will show in the numerical implementation in Section \ref{SecNumExperGaussNewt}.

We present results of extensive experiments for testing the methods and comparing with Matlab built-in function \emph{fsolve}.
In Section \ref{SecNumExperGaussNewt} the results are compared with known solutions of the problems to check if obtained stationary points are optimal solutions of the problems or not.
For 124 tested problems we obtain more than $80 \%$ of satisfying solutions in the sense of recovering known solutions with $<10\%$ error, or obtaining better ones by all methods with CPU time being less than half a second.
The number of recovered solutions as well as the performance profiles and feasibility check show that Gauss-Newton and Newton method with pseudo inverse outperform \emph{fsolve}.
It is worth mentioning that it is not typical to conduct such a number of experiments in the literature on testing solution methods for bilevel programming.
The conjecture of the similarity of the performance of the tested methods is verified numerically, also showing that Newton's method with pseudo inverse is indeed more robust than the classical Gauss-Newton method.
The technique for choosing the penalization parameter $\lambda$ is a heuristic that might depend on the structure of the problem.
%

\section{Optimality conditions and equation reformulation}\label{SecOptimalityCondition}

Let us start with some definitions required to state the main theorem of this section.
Define \emph{full convexity} of the lower-level as convexity of lower-level objective and all lower-level constraints with respect to all variables $(x,y)$.
Further on, a feasible point $(\bar{x},\bar{y}) \in \mathbb{R}^n \times \mathbb{R}^m$ is said to be \emph{lower-level regular} if there exists direction $d \in \mathds{R}^m$ such that
\begin{equation}\label{LMFCQ}
\nabla_y g_i(\bar{x},\bar{y})^T d < 0,\phantom{-} \text{for } i\in I_g(\bar{x},\bar{y}):=\{i: g_i (\bar{x}, \bar{y})=0\}.
\end{equation}
One can recognize that this is equivalent to \emph{MFCQ} holding for the lower-level constraints.
Similarly, for $(\bar{x},\bar{y}) \in \mathbb{R}^n \times \mathbb{R}^m$ satisfying the upper-level inequality constraints $G(\bar{x},\bar{y})$, $(\bar{x},\bar{y})$ is \emph{upper-level regular} if there exists a direction $d \in \mathds{R}^{n+m}$ such that
\begin{equation}\label{UMFCQ}
  \begin{array}{rl}
    \nabla G_j(\bar{x}, \bar{y})^T d < 0 & \text{for }  j\in I_G (\bar{x},\bar{y}):=\{j: G_j (\bar{x}, \bar{y})=0\},\\
    \nabla g_j(\bar{x}, \bar{y})^T d < 0 & \text{for }  j\in I_g (\bar{x},\bar{y}):=\{j: g_j (\bar{x}, \bar{y})=0\}.
  \end{array}
\end{equation}
Finally, to write the necessary optimality conditions for problem \eqref{initialvalfuncform0}, it is standard to use the following partial calmness concept \cite{optcondbil95}:
\begin{dfn}
Let $(\bar{x},\bar{y})$ be a local optimal solution of problem \eqref{initialvalfuncform0}. This problem is partially calm at $(\bar{x},\bar{y})$ if there exists $\lambda>0$ and a neighbourhood $U$ of $(\bar{x},\bar{y},0)$ such that
$$
F(x,y)-F(\bar{x},\bar{y})+\lambda |u| \geq 0,\;\, \forall (x,y, u)\in U:\; G(x,y)\leq 0, \; g(x,y)\leq 0, \; f(x,y)- \varphi(x)-u=0.
$$
\end{dfn}
According to \cite[Proposition 3.3]{optcondbil95}, problem \eqref{initialvalfuncform0}--\eqref{LLVF} being partially calm at a local optimal solution $(\bar{x},\bar{y})$ is equivalent to the existence of a parameter $\lambda>0$ such that $(\bar{x},\bar{y})$ is also a local optimal solution of problem
\begin{equation}\label{valuebilev2}
\underset{x,\,y}\min~F(x,y) + \lambda (f(x,y)-\varphi(x)) \;\mbox{ s.t. } \; G(x,y)\leq 0, \;\,  g(x,y)\leq 0.
\end{equation}
It is clear that this is a penalization of only the constraint $f(x,y)-\varphi(x)\leq 0$ with the penalty parameter $\lambda$.
Hence, problem \eqref{valuebilev2} is a usually labelled as a \emph{partial exact penalization} of problem \eqref{initialvalfuncform0}--\eqref{LLVF}.
With this reformulation it is now reasonable to assume standard constraint qualifications to derive optimality conditions.
Based on this, we have the following result, see, e.g., \cite{dempezemkoho1,newoptcond,bilevelreform,optcondbil95}, based on a particular estimate of the subdifferential of $\varphi$ \eqref{LLVF}.

\begin{thm}\label{BilevelOptTh}  Let $(\bar{x},\bar{y})$ be a local optimal solution to \eqref{initialvalfuncform0}--\eqref{LLVF}, where all function are assumed to be differentiable, $\varphi $ is finite around $\bar{x}$ and lower-level problem is fully convex.
 Further assume that the problem is partially calm at $(\bar{x},\bar{y})$, the lower-level regularity is satisfied at $(\bar{x},\bar{y})$ and upper-level regularity holds at $\bar{x}$. Then there exist $\lambda \geq0$, and Lagrange multipliers $u,v,w$ such that
\begin{align}
\nabla_x F(\bar{x}, \bar{y})+\nabla_x g(\bar{x}, \bar{y})^T (u - \lambda w)
+ \nabla_x G(\bar{x},\bar{y})^T v =0, \label{kktbilev12} \\
 \nabla_y F(\bar{x}, \bar{y}) +\nabla_y g(\bar{x}, \bar{y})^T (u-\lambda w) + \nabla_y G(\bar{x},\bar{y})^T v= 0, \label{kktbilev22} \\
\nabla_y f(\bar{x}, \bar{y}) + \nabla_y g(\bar{x}, \bar{y})^T w = 0, \label{kktbilev32} \\
u\geq 0, \;\; g(\bar{x}, \bar{y})\leq 0, \;\; u^T g(\bar{x}, \bar{y})=0, \label{kktbilev42} \\
v\geq 0, \;\; G(\bar{x}, \bar{y})\leq 0, \;\; v^T G(\bar{x},\bar{y})=0, \label{kktbilev52} \\
w\geq 0, \;\; g(\bar{x}, \bar{y})\leq 0, \;\; w^T g(\bar{x}, \bar{y})=0. \label{kktbilev62}
\end{align}
\end{thm}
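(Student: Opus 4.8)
\emph{Proof plan.} The plan is to pass from the value-function reformulation to its partial exact penalization \eqref{valuebilev2}, write down the corresponding nonsmooth stationarity conditions, and then eliminate the subdifferential of $\varphi$ by means of the classical upper estimate available under full convexity and lower-level regularity.

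\emph{Step 1 (reduction to \eqref{valuebilev2}).} Since \eqref{initialvalfuncform0}--\eqref{LLVF} is partially calm at the local optimal solution $(\bar{x},\bar{y})$, \cite[Proposition 3.3]{optcondbil95} provides $\lambda>0$ (hence $\lambda\ge 0$, as claimed) such that $(\bar{x},\bar{y})$ is a local minimizer of \eqref{valuebilev2}. I would also record two facts needed later: first, feasibility of $(\bar{x},\bar{y})$ for \eqref{initialvalfuncform0} together with finiteness of $\varphi$ near $\bar{x}$ forces $f(\bar{x},\bar{y})=\varphi(\bar{x})$, i.e. $\bar{y}\in S(\bar{x})$; second, full convexity of the lower level makes $\varphi$ a convex function (partial minimization over $y$ of the jointly convex function $f$ over the convex lower-level feasible set), which, being finite near $\bar{x}$, is locally Lipschitz there, so nonsmooth calculus is legitimate.

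\emph{Step 2 (stationarity for \eqref{valuebilev2}).} The objective $F+\lambda f-\lambda\varphi$ is locally Lipschitz, and upper-level regularity \eqref{UMFCQ} is exactly MFCQ for the inequality system $\{G\le 0,\; g\le 0\}$ at $(\bar{x},\bar{y})$. Applying the nonsmooth Lagrange multiplier rule (Clarke's, equivalently the limiting one, which coincide here since $\varphi$ is convex), and using that the subdifferential of $(x,y)\mapsto-\lambda\varphi(x)$ at $(\bar{x},\bar{y})$ is $-\lambda\,\partial\varphi(\bar{x})\times\{0\}$, I would obtain multipliers $u\ge 0$, $v\ge 0$ with $u^Tg(\bar{x},\bar{y})=0$, $v^TG(\bar{x},\bar{y})=0$, and some $\xi\in\partial\varphi(\bar{x})$ such that
\begin{align*}
\nabla_x F(\bar{x},\bar{y})+\lambda\nabla_x f(\bar{x},\bar{y})-\lambda\xi+\nabla_x g(\bar{x},\bar{y})^Tu+\nabla_x G(\bar{x},\bar{y})^Tv&=0,\\
\nabla_y F(\bar{x},\bar{y})+\lambda\nabla_y f(\bar{x},\bar{y})+\nabla_y g(\bar{x},\bar{y})^Tu+\nabla_y G(\bar{x},\bar{y})^Tv&=0.
\end{align*}

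\emph{Step 3 (eliminating $\partial\varphi$).} Here I would invoke the standard upper estimate for the subdifferential of the optimal value function of a fully convex program at a point where lower-level regularity \eqref{LMFCQ} holds and $\bar{y}\in S(\bar{x})$ (as in the references already cited, e.g. \cite{dempezemkoho1,newoptcond,bilevelreform,optcondbil95}): any $\xi\in\partial\varphi(\bar{x})$ has the form $\xi=\nabla_x f(\bar{x},\bar{y})+\nabla_x g(\bar{x},\bar{y})^Tw$ for some $w\ge 0$ with $\nabla_y f(\bar{x},\bar{y})+\nabla_y g(\bar{x},\bar{y})^Tw=0$ and $w^Tg(\bar{x},\bar{y})=0$. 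Substituting into the two equations of Step 2: in the first, $\lambda\nabla_x f$ cancels against the $\nabla_x f$-part of $\lambda\xi$, leaving \eqref{kktbilev12}; in the second, replacing $\lambda\nabla_y f$ by $-\lambda\nabla_y g(\bar{x},\bar{y})^Tw$ yields \eqref{kktbilev22}. The identity $\nabla_y f(\bar{x},\bar{y})+\nabla_y g(\bar{x},\bar{y})^Tw=0$ is \eqref{kktbilev32}, while the sign and complementarity conditions on $u,v,w$ from Steps 2--3 together with the feasibility $g(\bar{x},\bar{y})\le 0$ and $G(\bar{x},\bar{y})\le 0$ give \eqref{kktbilev42}--\eqref{kktbilev62}.

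\emph{Main obstacle.} The delicate step is Step 3, i.e. justifying the subdifferential upper estimate for $\varphi$: it relies on full convexity (so that $\varphi$ is convex, its various subdifferentials agree, and they are described by the lower-level KKT multipliers), on lower-level regularity (for nonemptiness/boundedness of that multiplier set and validity of the sensitivity formula), and on $\bar{y}\in S(\bar{x})$ from Step 1. A minor secondary point is verifying that upper-level regularity \eqref{UMFCQ} indeed serves as the constraint qualification needed for the nonsmooth multiplier rule in Step 2, i.e. that it is precisely MFCQ for the combined upper- and lower-level inequality constraints.
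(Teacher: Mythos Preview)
Your proposal is correct and follows precisely the standard route that the paper points to. Note, however, that the paper does not actually supply its own proof of this theorem: it states the result and refers to \cite{dempezemkoho1,newoptcond,bilevelreform,optcondbil95} for the derivation, explicitly mentioning that it is ``based on a particular estimate of the subdifferential of $\varphi$.'' Your three-step argument---partial calmness $\Rightarrow$ exact penalization \eqref{valuebilev2}, nonsmooth multiplier rule under upper-level regularity, then the convex value-function subdifferential formula under full convexity and lower-level regularity---is exactly that derivation, so there is nothing to compare: you have written out what the paper leaves to the references.
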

Depending on the assumptions made, we can obtain optimality conditions different from the above.
The details of different stationarity concepts can be found in the latter references, as well as in \cite{zemkohothesis}.
Weaker assumptions will typically lead to more general conditions.
However, making stronger assumptions allows us to obtain systems that are easier to handle.
For instance, it is harder to deal with more general conditions introduced in Theorem 3.5 of \cite{dempezemkoho1} or Theorem 3.1 of \cite{newoptcond} because of the presence of the convex hull in the corresponding estimated of the subdifferential of $\varphi$ \cite{dempezemkoho1,newoptcond,bilevelreform,optcondbil95}.
The other advantage of \eqref{kktbilev12}-\eqref{kktbilev62} is that, unlike the system studied in \cite{newtonbilevel18}, these conditions do not require to introduce a new lower-level variable.

The above optimality conditions involve the presence of complementarity conditions \eqref{kktbilev42}-\eqref{kktbilev62}, which result from inequality constraints present in  \eqref{initialvalfuncform0}--\eqref{LLVF}. In order to reformulate the complementarity conditions in the form of a system of equations, we are going use the concept of  NCP-functions; see, e.g., \cite{sun_ncp}. The function
 $\phi:\mathds{R}^2 \rightarrow \mathds{R}$ is said to be a NCP-function if we have
$$
\phi(a,b)=0\;\; \iff \;\; a\geq 0, \;\; b \geq 0, \;\; ab=0.
$$
In this paper, we use $\phi(a,b) := \sqrt{a^2+b^2}-a-b$, known as the \emph{Fischer-Burmeister} function \cite{fischer92}. This leads to the reformulation of the optimality conditions \eqref{kktbilev12}--\eqref{kktbilev62} into the system of equations:
\begin{align}
\Upsilon^\lambda (z) := \left(\begin{array}{rr}  \nabla_x F(x, y)+\nabla_x g(x, y)^T (u - \lambda  w)
+ \nabla_x G(x,y)^T v \\
  \nabla_y F(x, y) +\nabla_y g(x, y)^T (u-\lambda w)+ \nabla_y  G(x,y)^T v\\
  \nabla_y f(x, y) + \nabla_y g(x, y)^T w \\
 \sqrt{u^2+g(x, y)^2} - u+g(x, y) \\
  \sqrt{v^2+G(x,y)^2 } - v+G(x,y) \\
  \sqrt{w^2+g(x, y)^2 } - w+g(x, y) \end{array} \right) =
0, \label{bilevncp21}
\end{align}
where we have $z:=(x, y, u, v, w)$ and
\begin{equation}\label{hunam}
\sqrt{u^2+g(x, y)^2} - u+g(x, y) :=
\left(\begin{array}{c}
\sqrt{u_1^2+g_1(x, y)^2} - u_1+g_1(x, y) \\
\vdots \\
\sqrt{u_p^2+g_p(x, y)^2} - u_p+g_p(x, y)
 \end{array} \right).
\end{equation}
$\sqrt{v^2+G(x,y)^2 } - v+G(x,y)$ and $ \sqrt{w^2+g(x, y)^2 } - w+g(x, y)$ are defined as in \eqref{hunam}.
The superscript $\lambda$ is used to emphasize the fact that this number is a parameter and not a variable for equation \eqref{bilevncp21}. One can easily check that this system made of $n+2m+p+q+ p$ real-valued equations and $n+m+p+q+p$ variables. Clearly, this means that \eqref{bilevncp21} is an over-determined system and the Jacobian of $\Upsilon^\lambda (z)$, when it exists, is a non-square matrix.

\section{Gauss-Newton-type methods under strict complementarity}\label{SecNewton}
To solve equation  \eqref{bilevncp21}, we use a Gauss-Newton-type method, as the system is over-determined. Hence,  it is necessary to compute the Jacobian of $\Upsilon^\lambda(z)$ \eqref{bilevncp21}. However, the function is not differentiable at any point where one of the pairs
$$
(u_i, \, g_i(x,y)), \; i=1, \ldots, p, \;\; (v_j, \, G_j(x,y)), \; j=1, \ldots, q, \;\; and \;\; (w_i, \, g_i(x,y)), \; i=1, \ldots, p
$$
vanishes. To avoid this situation, we assume throughout this section that the strict complementarity condition holds:
\begin{asm}\label{AssumStrictComp}
The strict complementarity condition holds at $(x, y, u, v, w)$ if
$(u_i, \, g_i(x,y))\neq 0$ and $(w_i, \, g_i(x,y))\neq 0$ for all $i=1, \ldots, p$ and $(v_j, \, G_j(x,y))\neq 0$ for all $j=1, \ldots, q$.
\end{asm}
Under this assumption, the Jacobian of $\Upsilon^\lambda$  is well-defined everywhere and hence, the \emph{Gauss-Newton step} to solve equation \eqref{bilevncp21} can be defined as
\begin{equation}\label{GnewtStep}
d^k =-(\nabla \Upsilon^\lambda (z^k)^T \nabla \Upsilon^\lambda(z^k))^{-1} \nabla \Upsilon^\lambda(z^k)^T \Upsilon^\lambda(z^k),
\end{equation}
provided that the involved inverse matrix exists; see, e.g., \cite{fletch,nocedalw}. This leads to the following algorithm tailored to equation  \eqref{bilevncp21}:

 \begin{alg}
Gauss-Newton Method for Bilevel Optimization
\label{algorithm 3}
\begin{algorithmic}
 \STATE \textbf{Step 0}: Choose $\lambda >0$, $\epsilon>0$, $K>0$, $z^0:=(x^0, y^0, u^0, v^0, w^0)$, and set $k:=0$.
 \STATE \textbf{Step 1}: If $\norm{\Upsilon^{\lambda}(z^k)}<\epsilon$ or $k\geq K$, then stop.
 \STATE \textbf{Step 2}: Calculate Jacobian $\nabla \Upsilon^\lambda(z^k)$ and compute the direction $d^k$ using \eqref{GnewtStep}.
 \STATE \textbf{Step 3}: Set $z^{k+1}:=z^k + d^k$, $k:=k+1$, and go to Step 1.
\end{algorithmic}
\end{alg}
 In Algorithm \ref{algorithm 3}, $\epsilon$ denotes the tolerance and $K$ is the maximum number of iterations. It is clear from \eqref{GnewtStep} that for the algorithm to be well-defined, the matrix $\nabla \Upsilon^\lambda (z)^T \nabla \Upsilon^\lambda (z)$ needs to be non-singular. In the next subsection, we provide tractable conditions ensuring that this is possible.

\subsection{Nonsingularity of $\nabla \Upsilon^\lambda (z)^T \nabla \Upsilon^\lambda (z)$ and Convergence}\label{SubSecInvertibility}

To proceed, first start by noting the following, for any matrix $A$ with more rows than columns,
the matrix $A^T A$ has full rank if and only if the columns of $A$ are linearly independent.
This result is important as full rank of $A^T A$  is equivalent to invertibility of $A^T A$.
As $\nabla \Upsilon^\lambda (z)$ is a $(n+2m+2p+q)\times (n+m+2p+q)$ matrix with $m$ more rows than columns, the linear independence of its columns ensures that  $\nabla \Upsilon^\lambda (z)^T \nabla \Upsilon^\lambda (z)$ is non-singular. It therefore suffices for us to provide conditions guarantying the linear independence of the columns of $\nabla \Upsilon^\lambda (z)$.

To present the Jacobian of the system \eqref{bilevncp21} in the compact form, let the \emph{upper-level} and \emph{lower-level Lagrangian} functions be defined by
$$
L^\lambda(z):= F(x,y) + g(x,y)^T (u-\lambda w) + G(x,y)^T v \; \mbox{ and } \; \mathcal{L} (z) := f(x,y)+g(x,y)^T w,
$$
respectively. As we need the derivatives of these functions in the sequel, we denote the Hessian matrices of $L^\lambda$ and $\mathcal{L}$, w.r.t. $(x,y)$, by
\begin{equation} \label{Lagranderiv}
\begin{array}{c}
\nabla^2 L^\lambda (z) := \left[\begin{array}{cc}
                   \nabla_{xx}^2 L^\lambda (z) &  \nabla_{yx}^2 L^\lambda (z)\\
   \nabla_{xy}^2 L^\lambda (z) &  \nabla_{yy}^2 L^\lambda (z)
                  \end{array}
  \right]         \quad \mbox{ and } \quad   \nabla (\nabla_y \mathcal L (z)) := \left[\begin{array}{lr}
                                                        \nabla_{xy}^2 \mathcal{L} (z) &  \nabla_{yy}^2 \mathcal{L} (z)
                                                    \end{array}\right]
\end{array}
\end{equation}
respectively.  Furthermore, letting
$
\nabla g(x,y)^T := \left[\begin{array}{c}
            \nabla_{x} g(x,y)^T \\
   \nabla_{y} g(x,y)^T
   \end{array}\right]
$
and
$
\nabla G(x,y)^T := \left[\begin{array}{c}
            \nabla_{x} G(x,y)^T \\
   \nabla_{y} G(x,y)^T
   \end{array}\right],
$
we can easily check that the Jacobian of $\Upsilon^\lambda (z)$ w.r.t. $z$ can be written as
\begin{equation}\label{compactJac}
\nabla \Upsilon^\lambda (z) = \left[
  \begin{array}{cccc}
    \nabla^2 L^\lambda  (z) & \nabla g(x,y)^T & \nabla G(x,y)^T & -\lambda \nabla g(x,y)^T \\
     \nabla (\nabla_y \mathcal{L} (z)) & O & O & \nabla_y g(x,y)^T \\
    \mathcal{T} \nabla g(x,y) & \Gamma & O & O \\
     \mathcal{A} \nabla G(x,y) & O &  \mathcal{B} & O \\
    \Theta \nabla g(x,y) & O & O & \mathcal{K} \\
  \end{array}
\right]
\end{equation}
with $\mathcal{T} :=diag\,\{\tau_1, \ldots,\tau_p\}$, $\Gamma := diag\,\{\gamma_1, \ldots,\gamma_p\}$, $\mathcal{A}: = diag\,\{\alpha_1, \ldots, \alpha_q\}$, $\mathcal{B} := diag\,\{\beta_1, \ldots, \beta_q\}$, $\Theta := diag\,\{\theta_1, \ldots, \theta_p\}$, and $\mathcal{K} := diag\,\{\kappa_1, \ldots, \kappa_p\}$, where the pair $(\tau_j, \gamma_j)$, $j:=1, \ldots p$ is defined by
\begin{equation}\label{deftau}
\tau_j:= \frac{g_j (x,y)}{\sqrt{u_j^2 + g_j(x,y)^2}}+1 \, \mbox{ and } \, \gamma_j:= \frac{u_j}{\sqrt{u_j^2 + g_j(x,y)^2}} - 1, \, \mbox{ for }\, j=1, \ldots p.
\end{equation}
The pairs $(\alpha_j, \beta_j)$, $j=1, \ldots, q$ and $(\theta_j, \kappa_j)$, $j=1, \ldots, p$ are defined similarly in terms of $(G_j(x,y), v_j)$, $j=1, \ldots,  q$ and $(g_j(x,y), w_j)$, $j=1, \ldots, p$, respectively.
Similarly to the lower-level (resp. upper-level) regularity condition in \eqref{LMFCQ} (resp. \eqref{UMFCQ}), we will need the lower-level (resp. upper-level) linear independence constraint qualification denoted by LLICQ (resp. ULICQ) and will be said to hold at a point $(\bar x, \bar y)$ if the family of gradients
\begin{equation}\label{LLICQ+ULICQ}
  \left\{\nabla_y g_i(\bar x, \bar y),\;\, i\in I_g(\bar x, \bar y)\right\}\;\; \left(\mbox{resp. }\;\left\{\nabla g_i (\bar{x},\bar{y}),\; i \in I_g(\bar x, \bar y), \;\;\nabla G_j(\bar{x},\bar{y}),\; j\in I_G(\bar x, \bar y)\right\} \right)
\end{equation}
is linearly independent.

\begin{thm}\label{GaussNewtLinIndep}
Let the point $\bar{z}=(\bar{x},\bar{y},\bar{u},\bar{v},\bar{w})$ satisfy the system \eqref{bilevncp21} for some $\lambda >0$. Suppose that Assumption \ref{AssumStrictComp} holds at $(\bar x, \bar y, \bar u, \bar v, \bar w)$, the LLICQ and ULICQ is satisfied at $(\bar x, \bar y)$, and  the matrix $\nabla^2 L^\lambda (\bar{z})$ is positive definite. Then, the columns of the Jacobian matrix $\nabla \Upsilon^\lambda (\bar{z})$ are linearly independent.
\end{thm}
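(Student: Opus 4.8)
The plan is to show that if $c = (c_1, c_2, c_3, c_4)$ is a column vector (partitioned to match the block-column structure of $\nabla\Upsilon^\lambda(\bar z)$, with $c_1 \in \mathds{R}^{n+m}$ for the $(x,y)$-columns, $c_2 \in \mathds{R}^p$ for the $u$-columns, $c_3 \in \mathds{R}^q$ for the $v$-columns, and $c_4 \in \mathds{R}^p$ for the $w$-columns) satisfying $\nabla\Upsilon^\lambda(\bar z)\, c = 0$, then $c = 0$. Writing out the five block-rows of the product using the compact form \eqref{compactJac}, this amounts to the system
\begin{align}
\nabla^2 L^\lambda(\bar z)\, c_1 + \nabla g(\bar x,\bar y)^T c_2 + \nabla G(\bar x,\bar y)^T c_3 - \lambda\, \nabla g(\bar x,\bar y)^T c_4 &= 0, \label{pp1}\\
\nabla(\nabla_y \mathcal{L}(\bar z))\, c_1 + \nabla_y g(\bar x,\bar y)^T c_4 &= 0, \label{pp2}\\
\mathcal{T}\, \nabla g(\bar x,\bar y)\, c_1 + \Gamma\, c_2 &= 0, \label{pp3}\\
\mathcal{A}\, \nabla G(\bar x,\bar y)\, c_1 + \mathcal{B}\, c_3 &= 0, \label{pp4}\\
\Theta\, \nabla g(\bar x,\bar y)\, c_1 + \mathcal{K}\, c_4 &= 0. \label{pp5}
\end{align}

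First I would use strict complementarity (Assumption \ref{AssumStrictComp}) to analyze the diagonal matrices. For each index $j$, exactly one of $\bar u_j$, $g_j(\bar x,\bar y)$ is nonzero (since the Fischer–Burmeister equation forces $\bar u_j \ge 0$, $g_j \le 0$, $\bar u_j g_j = 0$, and the pair is nonzero). If $g_j(\bar x,\bar y) < 0$ and $\bar u_j = 0$, then $\gamma_j = -1 \neq 0$ while $\tau_j = 0$; if $g_j(\bar x,\bar y) = 0$ and $\bar u_j > 0$, then $\tau_j = 2 \neq 0$ while $\gamma_j = 0$. So in every case exactly one of $\tau_j, \gamma_j$ vanishes and the other does not; the same holds for $(\alpha_j,\beta_j)$ and $(\theta_j,\kappa_j)$. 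Partition $\{1,\dots,p\} = I^0_g \cup I^-_g$ into active ($g_j = 0$) and inactive ($g_j < 0$) lower-level indices, and similarly split the upper-level indices. From \eqref{pp3}: on $I^-_g$ we get $c_2$'s component $= 0$ (since $\gamma_j \neq 0$, $\tau_j = 0$), and on $I^0_g$ we get $\nabla_y g_j(\bar x,\bar y)\cdot$(the $y$-part)$\, \cdot\, 2 + 0 = 0$... more precisely $\tau_j \nabla g_j(\bar x,\bar y)^T c_1 = 0$, i.e. the full gradient $\nabla g_j(\bar x,\bar y)^T c_1 = 0$ for $j \in I^0_g$. Likewise \eqref{pp4} gives $(c_3)_j = 0$ for inactive $G$-indices and $\nabla G_j(\bar x,\bar y)^T c_1 = 0$ for active ones, and \eqref{pp5} gives $(c_4)_j = 0$ for inactive $g$-indices and $\nabla g_j(\bar x,\bar y)^T c_1 = 0$ for active ones (consistent with what \eqref{pp3} already gave).

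Next, substitute these conclusions back. Since $(c_2)_j, (c_3)_j, (c_4)_j$ all vanish off the active index sets, equation \eqref{pp1} collapses to $\nabla^2 L^\lambda(\bar z)\, c_1 + \sum_{j\in I^0_g}(c_2)_j \nabla g_j - \lambda\sum_{j\in I^0_g}(c_4)_j \nabla g_j + \sum_{j\in I^0_G}(c_3)_j \nabla G_j = 0$. Now take the inner product with $c_1$: the three sums all vanish because $\nabla g_j^T c_1 = 0$ on $I^0_g$ and $\nabla G_j^T c_1 = 0$ on $I^0_G$, leaving $c_1^T \nabla^2 L^\lambda(\bar z)\, c_1 = 0$, and positive definiteness of $\nabla^2 L^\lambda(\bar z)$ forces $c_1 = 0$. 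With $c_1 = 0$, equation \eqref{pp2} becomes $\nabla_y g(\bar x,\bar y)^T c_4 = 0$; but $(c_4)_j = 0$ off $I^0_g$, so this reads $\sum_{j\in I^0_g}(c_4)_j \nabla_y g_j(\bar x,\bar y) = 0$, and LLICQ (linear independence of $\{\nabla_y g_j : j \in I^0_g\}$) gives $(c_4)_j = 0$ for $j \in I^0_g$, hence $c_4 = 0$. Finally, with $c_1 = 0$, equation \eqref{pp1} reduces to $\sum_{j\in I^0_g}(c_2)_j \nabla g_j(\bar x,\bar y) + \sum_{j\in I^0_G}(c_3)_j \nabla G_j(\bar x,\bar y) = 0$, and ULICQ (linear independence of the combined family $\{\nabla g_j : j\in I^0_g\} \cup \{\nabla G_j : j\in I^0_G\}$ in $\mathds{R}^{n+m}$) forces $(c_2)_j = 0$ on $I^0_g$ and $(c_3)_j = 0$ on $I^0_G$, hence $c_2 = c_3 = 0$. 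Thus $c = 0$ and the columns are linearly independent.

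The main obstacle is bookkeeping rather than a deep idea: one must carefully track, for each of the three complementarity blocks, which diagonal entry vanishes and which does not, and correctly translate the $2\times 2$ Fischer–Burmeister derivative pattern into the two cases "$c$-component is zero" versus "full constraint gradient annihilates $c_1$." The one genuinely subtle point worth double-checking is that $\tau_j$ never vanishes simultaneously with $\gamma_j$ under strict complementarity — this is exactly what prevents the active-index rows of \eqref{pp3}–\eqref{pp5} from degenerating — and that the sign/normalization in \eqref{deftau} gives $\tau_j = 2$ (not $0$) and $\gamma_j = 0$ in the active case. Everything else is linear algebra: two applications of a constraint qualification and one application of positive definiteness, chained in the order $c_1 \to c_4 \to (c_2, c_3)$.
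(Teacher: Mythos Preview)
Your proof is correct and follows essentially the same route as the paper's: set up the block system, use strict complementarity to split each index set into an ``active'' part (where the constraint-gradient annihilates $c_1$) and an ``inactive'' part (where the multiplier component vanishes), pair \eqref{pp1} with $c_1$ to kill $c_1$ via positive definiteness, then invoke LLICQ on \eqref{pp2} for $c_4$ and ULICQ on \eqref{pp1} for $(c_2,c_3)$. One harmless slip: in the active case $g_j=0$, $\bar u_j>0$ you get $\tau_j = 1$, not $2$ --- but only $\tau_j\neq 0$ is needed, so the argument stands.
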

\begin{proof} Consider an arbitrary vector $d :=(d^\top_1, d^\top_2, d^\top_3, d^\top_4)^T$ such that $\nabla \Upsilon^\lambda (\bar{z})d=0$ with the components $d_1\in \mathbb{R}^{n+m}$, $d_2\in \mathbb{R}^p$, $d_3\in \mathbb{R}^q$, and $d_4\in \mathbb{R}^p$. Then we have
\begin{align}
\nabla^2 L^\lambda (\bar z) d_1 + \nabla g(\bar{x}, \bar{y})^T d_2 +  \nabla G(\bar{x}, \bar{y})^T d_3   -\lambda \nabla g(\bar{x}, \bar{y})^T d_4 = 0, \label{indep1} \\
\mathcal{T} \nabla g(\bar{x}, \bar{y}) d_1 +  \Gamma d_2 = 0,  \label{indep2} \\
 \mathcal{A} \nabla G(\bar{x}, \bar{y}) d_1 + \mathcal{B} d_3 = 0,\label{indep3} \\
 \Theta \nabla g(\bar{x}, \bar{y}) d_1   +  \mathcal{K} d_4 = 0, \label{indep4} \\
\nabla (\nabla_y \mathcal{L} (\bar z)) d_1 +  \nabla_y g(\bar{x}, \bar{y})^T d_4 = 0. \label{indep5}
\end{align}
On the other hand, it obviously follows from \eqref{deftau} that
\begin{equation}\label{taugammprop}
(\tau_j-1)^2+(\gamma_j+1)^2=1 \;\; \mbox{ for }\;\, j=1, \ldots p.
\end{equation}
Hence, the indices of the pair $(\tau, \gamma)$ satisfying \eqref{taugammprop} can be partitioned into the sets
$$
 P_1:= \{j: \tau_j > 0, \;\, \gamma_j <0\}, \;\; P_2 :=\{j: \tau_j=0 \}, \;\; \mbox{ and } \;\; P_3 :=\{j: \gamma_j =0 \}.
$$
Similarly, define index sets $Q_1$, $Q_2$, and $Q_3$ for the pair $(\alpha, \beta)$ and $T_1$, $T_2$, and $T_3$ for $(\theta, \kappa)$. Next, consider the following componentwise description of \eqref{indep2}, \eqref{indep3}, and \eqref{indep4},
\begin{align}
\tau_j \nabla g_j(\bar{x}, \bar{y})^T d_1 +  \gamma_j d_{2_j} = 0 \phantom{-} \;\text{for } j=1,...,p, \label{indep21} \\
 \alpha_j \nabla G_j(\bar{x}, \bar{y})^T d_1 + \beta_j d_{3_j} = 0  \phantom{-} \;\text{for } j=1,...,q,  \label{indep31} \\
 \theta_j \nabla g_j(\bar{x}, \bar{y})^T d_1   +  \kappa_j d_{4_j} = 0  \phantom{-} \;\text{for } j=1,...,p. \label{indep41} 
\end{align}

For $j\in P_2$ equation \eqref{indep21} becomes $\gamma_j d_{2_j} = 0$. Additionally, it follows from \eqref{taugammprop}
 that  for $j\in P_2$, $\gamma_j \neq 0$. Hence $d_{2_j}=0$ for $j\in P_2$.
For $j\in P_3$,  \eqref{indep21} leads to $\tau_j \nabla g_j(\bar{x}, \bar{y})^T d_1 = 0$, which due to the property above translates into $\nabla g_j(\bar{x}, \bar{y})^T d_1 = 0$, as $\tau_j \neq 0$. Finally, for $j\in P_1$ equation (\ref{indep21}) takes the form $\nabla g_j(\bar{x}, \bar{y})^T d_1 = - \frac{\gamma_j}{\tau_j} d_{2_j}$, where by definition of $P_1$ we know that $- \frac{\gamma_j}{\tau_j}>0$.
Following the same logic, we respectively have from \eqref{indep31} and  \eqref{indep41} that
$$
\begin{array}{lll}
d_{3_j}=0\; \mbox{ for } \; j\in Q_2, &\nabla G_j(\bar{x}, \bar{y})^T d_1 = 0\; \mbox{ for } \; j\in Q_3, &\nabla G_j(\bar{x}, \bar{y})^T d_1 = - \frac{\beta_j}{\alpha_j} d_{3_j} \; \mbox{ for } \; j\in Q_1,\\
d_{4_j}=0\; \mbox{ for } \; j\in T_2, & \nabla g_j(\bar{x}, \bar{y})^T d_1 = 0\; \mbox{ for } \; j\in T_3, & \nabla g_j(\bar{x}, \bar{y})^T d_1 = - \frac{\kappa_j}{\theta_j} d_{4_j} \; \mbox{ for } \; j\in T_1,
\end{array}
$$
with $- \frac{\beta_j}{\alpha_j}>0$ for $j\in Q_1$ and $- \frac{\kappa_j}{\theta_j}>0$ for $j\in T_1$. Multiplying \eqref{indep1} by $d_1 ^T$,
\begin{equation}
d_1^T \nabla^2 L^\lambda (\bar{z}) d_1 + d_1^T \nabla g(\bar{x}, \bar{y})^T d_2 +  d_1^T \nabla G(\bar{x}, \bar{y})^T d_3   - \lambda d_1^T \nabla g(\bar{x}, \bar{y})^T d_4 = 0. \label{mainindep1}
\end{equation}
Considering the cases defined above, we know that for $j\in P_2$, $j\in Q_2$ and $j\in T_2$, the terms $d_{2_j}, d_{3_j}$ and $d_{4_j}$ disappear. For $j\in P_3$, $j\in Q_3$ and $j\in T_3$, the terms $\nabla g_j(\bar{x}, \bar{y})^T d_1$, $\nabla G_j(\bar{x}, \bar{y})^T d_1$ and $\nabla g_j(\bar{x}, \bar{y})^T d_1$ also vanish.
This leads to the equation \eqref{mainindep1} being simplified to
\begin{equation}
d_1^T \nabla^2 L^\lambda (\bar{z}) d_1 + \sum_{j\in P_1} \left(- \frac{\gamma_j}{\tau_j} \right) d_{2_j}^2 +  \sum_{j\in Q_1}  \left(- \frac{\beta_j}{\alpha_j}\right) d_{3_j}^2   - \lambda  \sum_{j\in T_1} \left(- \frac{\kappa_j}{\theta_j}\right) d_{4_j}^2  = 0. \label{mainindep2}
\end{equation}
One can easily check that thanks to Assumption \ref{AssumStrictComp}, the sets $P_1$, $Q_1$, and $T_1$ are empty. Hence, \eqref{mainindep2} reduces to
$
d_1^T \nabla^2 L^\lambda (\bar{z}) d_1  = 0,
$
which implies $d_1 = 0 $ under the positive definiteness of $\nabla^2 L^\lambda$.

We have shown that $d_{2_j}=0$, $d_{3_j}=0$ and $d_{4_j}=0$ for $j\in P_2$, $j\in Q_2$ and $j\in T_2$, and $d_{1_j} = 0 $ for all $j$.
Let us use these results to simplify equations \eqref{indep1} and \eqref{indep5}  as follows
\begin{align}
\sum_{j\in P_3} \nabla g_j(\bar{x}, \bar{y})^T d_{2_j} + \sum_{j\in Q_3} \nabla G_j(\bar{x}, \bar{y})^T d_{3_j}   - \lambda \sum_{j\in T_3}  \nabla g_j(\bar{x}, \bar{y})^T d_{4_j} = 0, \label{thirdindep1} \\
\sum_{j\in T_3} \nabla_y g_j (\bar{x}, \bar{y})^T d_{4_j} = 0. \label{thirdindep2}
\end{align}
Equation \eqref{thirdindep2} implies that $d_{4_j}=0$ for all $j\in T_3$, given that $T_3 \subseteq I_g(\bar x, \bar y)$ and the LLICQ holds at $(\bar{x}, \bar{y})$. Then \eqref{thirdindep1} becomes
\begin{equation}
\sum_{j\in P_3} \nabla g_j(\bar x, \bar y)^T d_{2_j} + \sum_{j\in Q_3} \nabla G_j(\bar x, \bar y)^T d_{3_j}=0, \nonumber
\end{equation}
which implies $d_{2_j}=0$ and $d_{3_j}=0$ for $j\in P_3$ and $j\in Q_3$, given that the ULICQ holds at $(\bar x, \bar y)$.
This completes the proof as we have shown that $\nabla \Upsilon^\lambda(\bar{z}) d = 0 $ only if $d=0$.
\end{proof}

\begin{exm}\label{ExampLinIndepAssum}
We consider an instance of problem \eqref{initialbilev} taken from the BOLIB Library \cite{bolib17} with
$$
\begin{array}{lll}
 \begin{array}{l}
 F(x, y) := (x-3)^2 + (y-2)^2,\\
 f(x, y) := (y-5)^2,
 \end{array}  &
 G(x, y) := \left(\begin{array}{c}  x-8\\
                                      -x
 \end{array} \right), & g(x, y) :=  \left(\begin{array}{c}  -2x+y-1\\
x-2y-2 \\
x+2y-14 \end{array} \right).
\end{array}
$$
The point
$\bar{z}=(\bar{x}, \bar{y}, \bar{u}_1, \bar{u}_2, \bar{u}_3, \bar{v}_1, \bar{v}_2, \bar{w}_1, \bar{w}_2, \bar{w}_3)=(1,3,4\lambda -2,0,0,0,0,4,0,0)$ satisfies equation \eqref{bilevncp21} for any $\lambda > 1/2$.
Obviously, strict complementarity holds at this point, for $\lambda>1/2$.
and the family of vectors $\{\nabla g_j (\bar{x},\bar{y}), j \in I_g(x,y), \nabla G_j(\bar{x},\bar{y}), j\in I_G(x,y)\}$ is linearly independent, as $I_g(x,y)=\{1\}$,  $I_G(x,y)=\emptyset$. It is easy to see that ULICQ holds as $\nabla g_1(\bar{x}, \bar{y})^T = (-2, 1)^T \neq 0$, and LLICQ holds as $\nabla_y g_1(\bar{x}, \bar{y})^T = 1 \neq 0$. Finally, we obviously have that $\nabla^2 L^\lambda (\bar{z})=2e$, where $e$ is the identity matrix of $\mathbb{R}^{2\times 2}$, is positive definite. In conclusion, this example shows that all assumptions of Theorem \ref{GaussNewtLinIndep} can hold for a bilevel program and therefore the Gauss-Newton method in \eqref{algorithm 3} is well-defined.
\end{exm}

Based on the result above, we can now state the convergence theorem for our Gauss-Newton Algorithm \ref{algorithm 3}.
To proceed, first note that by implementing the Gauss-Newton method to solve \eqref{bilevncp21}, leads to a solution to the least-square problem
\begin{equation}\label{Merit function}
\underset{z}\min~\Phi^\lambda(z):= \sum_{i=1}^{N+m} \Upsilon^\lambda_i(z)^2,
\end{equation}
where we define $N:=n+m+2p+q$.
The direction of the Newton method for problem \eqref{Merit function} can be written as
$$
d^k := - (\nabla\Upsilon^\lambda(z_k)^T \nabla\Upsilon^\lambda(z_k) + T(z_k))^{-1} \nabla\Upsilon^\lambda(z_k) \Upsilon(z_k),
$$
where $T(z_k) := \sum_{i=1}^N \Upsilon^\lambda_i (z_k) \nabla^2 \Upsilon^\lambda_i(z_k)$ is the term that is omitted in the Gauss-Newton direction \eqref{GnewtStep}.
It is well known that the Gauss-Newton method converges with the same rate as Newton method if the term $T(\bar{z})$ is small enough in comparison with the term $\nabla \Upsilon^\lambda(\bar{z})^T \nabla \Upsilon^\lambda(\bar{z})$; see, e.g., \cite{dennisbook96, nocedalw, SunYuan06}. This is the basis of the following convergence result of Algorithm \ref{algorithm 3}.

\begin{thm}  \label{convtheorem}
Let the assumptions in Theorem \ref{GaussNewtLinIndep} hold and suppose that $\bar{z}$ is a local optimal solution of problem \eqref{Merit function} and $\{z^k \}$ be a sequence of points generated by Algorithm \ref{algorithm 3} converging to $\bar{z}$.
Furthermore, assuming that $\nabla^2 L^\lambda$ and $\nabla (\nabla_y)\mathcal{L}$ are well-defined and Lipschitz continuous in the neighbourhood of $\bar{z}$,
$$
\norm{z^{k+1}-\bar{z} } \leq \norm{(\nabla \Upsilon^\lambda(\bar{z})^T \nabla \Upsilon^\lambda(\bar{z}))^{-1}} \norm{T(\bar{z})} \norm{z^k - \bar{z}} + O(\norm{z_k - \bar{z}}^2).
$$
\end{thm}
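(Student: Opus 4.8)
The plan is to run the classical local error analysis for a Gauss--Newton iteration, using as the only structural input that $\bar z$ is a stationary point of the least-squares objective $\Phi^\lambda$ in \eqref{Merit function}. Since $\Phi^\lambda(z)=\norm{\Upsilon^\lambda(z)}^2$, local optimality of $\bar z$ gives $\nabla\Phi^\lambda(\bar z)=2\,\nabla\Upsilon^\lambda(\bar z)^T\Upsilon^\lambda(\bar z)=0$, hence $\nabla\Upsilon^\lambda(\bar z)^T\bar\Upsilon=0$ with $\bar\Upsilon:=\Upsilon^\lambda(\bar z)$. I will abbreviate $J_k:=\nabla\Upsilon^\lambda(z^k)$, $J:=\nabla\Upsilon^\lambda(\bar z)$, $e_k:=z^k-\bar z$. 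First I would record well-definedness along the tail of $\{z^k\}$: by Theorem \ref{GaussNewtLinIndep} the columns of $J$ are linearly independent, so $J^TJ$ is invertible; since the hypotheses on $\nabla^2L^\lambda$ and $\nabla(\nabla_y\mathcal{L})$, together with Assumption \ref{AssumStrictComp} (which keeps the Fischer--Burmeister blocks of \eqref{bilevncp21} in their smooth regime), make $z\mapsto\nabla\Upsilon^\lambda(z)$ continuous near $\bar z$, the matrix $J_k^TJ_k$ is invertible for all large $k$, so the Gauss--Newton step \eqref{GnewtStep} in Algorithm \ref{algorithm 3} is well defined and, moreover, $(J_k^TJ_k)^{-1}=(J^TJ)^{-1}+O(\norm{e_k})$.

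The core step is a Taylor expansion of $\Upsilon^\lambda$ at $\bar z$. Because $\nabla\Upsilon^\lambda$ is Lipschitz on a neighbourhood of $\bar z$, the integral form of the mean value theorem gives $\Upsilon^\lambda(z^k)=\bar\Upsilon+J_ke_k+O(\norm{e_k}^2)$. Substituting this and $z^{k+1}=z^k-(J_k^TJ_k)^{-1}J_k^T\Upsilon^\lambda(z^k)$ yields
\[
z^{k+1}-\bar z \;=\; e_k-(J_k^TJ_k)^{-1}\bigl(J_k^T\bar\Upsilon+J_k^TJ_k\,e_k+O(\norm{e_k}^2)\bigr)\;=\;-(J_k^TJ_k)^{-1}J_k^T\bar\Upsilon+O(\norm{e_k}^2),
\]
the leading term $e_k$ cancelling exactly. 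Next I would use the stationarity identity $J^T\bar\Upsilon=0$ to rewrite $J_k^T\bar\Upsilon=(J_k-J)^T\bar\Upsilon$ and expand this to one further order: the $\ell$-th entry of $(J_k-J)^T\bar\Upsilon$ is $\sum_i\bar\Upsilon_i\bigl(\partial_\ell\Upsilon^\lambda_i(z^k)-\partial_\ell\Upsilon^\lambda_i(\bar z)\bigr)$, and a first-order expansion of each $\partial_\ell\Upsilon^\lambda_i$ about $\bar z$ turns this into $\bigl(\sum_i\bar\Upsilon_i\nabla^2\Upsilon^\lambda_i(\bar z)\,e_k\bigr)_\ell+O(\norm{e_k}^2)$; stacking over $\ell$,
\[
(J_k-J)^T\bar\Upsilon \;=\; T(\bar z)\,e_k+O(\norm{e_k}^2),
\]
which is exactly how the term $T(\bar z)$ omitted from the Gauss--Newton direction re-enters the error recursion.

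Combining the two displays with $(J_k^TJ_k)^{-1}=(J^TJ)^{-1}+O(\norm{e_k})$ gives $z^{k+1}-\bar z=-(J^TJ)^{-1}T(\bar z)\,e_k+O(\norm{e_k}^2)$, and taking norms and using submultiplicativity of the operator norm yields
\[
\norm{z^{k+1}-\bar z}\;\le\;\norm{\bigl(\nabla\Upsilon^\lambda(\bar z)^T\nabla\Upsilon^\lambda(\bar z)\bigr)^{-1}}\,\norm{T(\bar z)}\,\norm{z^k-\bar z}+O(\norm{z^k-\bar z}^2),
\]
which is the claimed estimate. (Under the full hypotheses of Theorem \ref{GaussNewtLinIndep}, $\bar z$ solves \eqref{bilevncp21}, so $\bar\Upsilon=0$ and $T(\bar z)=0$, and the bound collapses to genuine quadratic convergence; keeping $\bar\Upsilon\neq 0$ in the argument is harmless and reproduces the general Gauss--Newton rate.) The main obstacle is the bookkeeping that makes $T(\bar z)$ appear at all: one must resist linearising $\Upsilon^\lambda(z^k)$ too crudely, since replacing it merely by $\bar\Upsilon+J_ke_k$ and dropping all finer information would only reproduce $\nabla\Phi^\lambda(\bar z)=0$ and lose every rate estimate, so the product $J_k^T\bar\Upsilon=(J_k-J)^T\bar\Upsilon$ must be tracked to second order. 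A secondary, purely technical point is checking that the stated Lipschitz hypotheses are enough to absorb each $O(\norm{e_k}^2)$ remainder; for the $T(\bar z)$ term this tacitly relies on Lipschitz continuity of the second derivatives of the problem data, as is standard in the Gauss--Newton convergence theory cited after \eqref{Merit function}.
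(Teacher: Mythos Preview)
Your argument is correct and essentially self-contained: you run the standard local error analysis for Gauss--Newton directly, using stationarity $J^T\bar\Upsilon=0$, the Taylor expansion $\Upsilon^\lambda(z^k)=\bar\Upsilon+J_ke_k+O(\norm{e_k}^2)$, and a further first-order expansion of $(J_k-J)^T\bar\Upsilon$ to bring out $T(\bar z)e_k$. This is a genuinely different route from the paper's proof, which does not carry out any of this analysis explicitly. Instead, the paper verifies that the structural hypotheses of a textbook convergence theorem are met---full column rank of $\nabla\Upsilon^\lambda(\bar z)$ from Theorem~\ref{GaussNewtLinIndep}, differentiability of $\Upsilon^\lambda$ and $\nabla\Upsilon^\lambda$ near $\bar z$ via strict complementarity, and Lipschitz continuity of $(\nabla\Upsilon^\lambda)^T\nabla\Upsilon^\lambda$ and of $T$---and then invokes \cite[Theorem~7.2.2]{SunYuan06} as a black box. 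Your approach buys transparency: one sees exactly how $T(\bar z)$ emerges as the obstruction to quadratic convergence, and your parenthetical remark that the hypotheses of Theorem~\ref{GaussNewtLinIndep} force $\bar\Upsilon=0$, hence $T(\bar z)=0$, is a useful observation the paper does not make explicit. The paper's approach buys brevity and delegates the bookkeeping you rightly flag as the main obstacle. Your caveat about tacitly needing Lipschitz continuity of the \emph{second} derivatives of $\Upsilon^\lambda$ (equivalently, third-order data on $F,f,g,G$) to secure the $O(\norm{e_k}^2)$ remainder in the $T(\bar z)$ step is well taken; the paper's proof has the same tension between the stated hypothesis (Lipschitz $\nabla^2L^\lambda$) and what is actually used (Lipschitz $T$), so you are no worse off there.
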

\begin{proof}
Start by recalling that under the assumptions of Theorem \ref{GaussNewtLinIndep}, the matrix $\nabla \Upsilon^\lambda(\bar{z})$ is of full column rank. Hence, it is positive definite. Furthermore, under the strict complementarity condition (made in Theorem \ref{GaussNewtLinIndep}) and well-definiteness of derivatives of $\nabla^2 L^\lambda$ and $\nabla (\nabla_y \mathcal{L}(z))$ in the neighbourhood of $\bar{z}$, all components of $z \mapsto \Upsilon^\lambda(z)$ and $z \mapsto \nabla \Upsilon^\lambda(z)$ are differentiable near $\bar{z}$.
Hence the term $T(z)$ is well-defined near $\bar{z}$.
Further on, Lipschitz continuity of derivatives of $\nabla^2 L^\lambda (z)$ and $\nabla (\nabla_y \mathcal{L}(z))$ implies Lipschitz continuity of $\nabla\Upsilon^\lambda(z)^T \nabla\Upsilon^\lambda(z)$ and of $T(z)$.
Hence the term $\big(\nabla\Upsilon^\lambda(z)^T \nabla\Upsilon^\lambda(z)+T(z) \big)$ is Lipschitz continuous in the neighbourhood of $\bar{z}$.
Finally, since $\nabla \Upsilon^\lambda(z)$ is differentiable near $\bar{z}$, the product $\nabla\Upsilon^\lambda(z)^T \nabla\Upsilon^\lambda(z)$ is also differentiable near $\bar{z}$.
We also know that $\nabla\Upsilon^\lambda(z)^T \nabla\Upsilon^\lambda(z)$ is non-singular in the neighbourhood of $\bar{z}$ under assumptions of Theorem \ref{GaussNewtLinIndep}.
By the Inverse Function Theorem, differentiability and non-singularity of $\nabla\Upsilon^\lambda(z)^T \nabla\Upsilon^\lambda(z)$ is sufficient to state that $\nabla\Upsilon^\lambda(z)^T \nabla\Upsilon^\lambda(z)$ is a diffeomorphism, and hence has a smooth and differentiable inverse $(\nabla\Upsilon^\lambda(z)^T \nabla\Upsilon^\lambda(z))^{-1}$ in the neighbourhood of $\bar{z}$.
 Smoothness and differentiability of $(\nabla\Upsilon^\lambda(z)^T \nabla\Upsilon^\lambda(z))^{-1}$ imply that $(\nabla\Upsilon^\lambda(z)^T \nabla\Upsilon^\lambda(z))^{-1}$ is Lipschitz continuous in the neighbourhood of $\bar{z}$.
 This leads to the result by applying \cite[Theorem 7.2.2]{SunYuan06}.
\end{proof}

With Theorem \ref{convtheorem} it is easy to see that Gauss-Newton method converges quadratically if $T(\bar{z})=0$
and Q-Linearly if $T(\bar{z})$ is small relative to $\nabla \Upsilon^\lambda(\bar{z})^T \nabla \Upsilon^\lambda(\bar{z})$.
Such properties could be satisfied for small residuals problems and for the problems that are not too nonlinear.
For the small residuals problems we have that the components  $\Upsilon^\lambda_i(\bar{z})$ are small for all $i$, which makes the term $T(\bar{z})$ small.
For the problems with not too much nonlinearity the components $\nabla^2 \Upsilon^\lambda_i(\bar{z})$ are small for all $i$, which also results in small $T(\bar{z})$.
If it turns out that we can obtain an exact solution $\Upsilon^\lambda(\bar{z})=0$, then $T(\bar{z})=0$, and we have quadratic convergence. It is worth noting that in general we cannot always have $\Upsilon^\lambda_i(\bar{z})=0$ for all $i$ as the system is overdetermined, but minimizing the sum of the squares of $\Upsilon^\lambda_i(\bar{z})$ we obtain a solution point $\bar{z}$, at which $\sum_{i=1}^{N+m} (\Upsilon^\lambda_i(z))^2$ is as small as possible.
If the problem has small residuals, then small $\Upsilon_i^\lambda (\bar{z})$ are naturally obtained by implementing Algorithm \ref{algorithm 3} as this is designed to minimize $\sum_{i=1}^{N+m} \Upsilon^\lambda_i(z)^2$.
In terms of having small components $\nabla^2 \Upsilon^\lambda_i(\bar{z})$ we observe that $\nabla^2 \Upsilon^\lambda(\bar{z})$ will involve many third derivatives of $F(x,y), G(x,y), f(x,y)$ and $g(x,y)$.
Hence, if the original problem is not too nonlinear, the Hessian of the system \eqref{bilevncp21} should be small.
As a result if there exists reasonable solution with $\Upsilon_i^\lambda(z) \approx0$ for all $i$ or if the original problem \eqref{initialbilev} is not too nonlinear, then the Gauss-Newton for Bilevel Programming converges Q-linearly.

The first drawback of Algorithm \ref{algorithm 3} is the requirement of strict complementarity in Assumption \ref{AssumStrictComp}, to help ensure the differentiability of the function $\Upsilon^\lambda$.
Assumption \ref{AssumStrictComp} is very strong and in the context of the pool of test problems used for our numerical experiments in Section \ref{SecNumExperGaussNewt}, for example, it did not hold at the last iteration for at least one value of $\lambda$ for the total of 54 out of the 124 problems.
If one wants to avoid the \emph{strict complementarity} assumption, one option is to use \emph{smoothing technique} for Fischer-Burmeister function; this will be discussed in Section \ref{GnewtSmoothed}.
Before we move to this, it is worth mentioning that a second issue faced by our Algorithm \ref{algorithm 3} is the requirement that the matrix $\nabla \Upsilon^\lambda (z) ^T \nabla \Upsilon^\lambda (z)$ be nonsingular at each iteration. To deal with this, one option is a Newton step, where the generalized inverse of  $\nabla \Upsilon^\lambda (z) ^T \nabla \Upsilon^\lambda (z)$, which always exists, is calculated. This is briefly discussed in the next subsection.

\subsection{Newton method with Moore-Penrose pseudo inverse}\label{SubSecPseudoNewton}
Indeed, one of the most challenging aspects of the Gauss-Newton step in Algorithm \ref{algorithm 3} is the computation of the inverse of the matrix $\nabla \Upsilon^\lambda (z_k)^T \nabla \Upsilon^\lambda (z_k)$, as this quantity might not exist at some iterations.
To deal with situations where the inverse of the matrix does not exist, various concepts of generalized inverse have been used in the context of the Newton method; see, e.g., \cite{pan91} for related details. Although we do not directly compute $\left(\nabla \Upsilon^\lambda (z_k)^T \nabla \Upsilon^\lambda (z_k)\right)^{-1}$ in our implementation of Algorithm \ref{algorithm 3} in Section \ref{SecNumExperGaussNewt}, we would like to compare the \emph{pure} Gauss-Newton-type method presented in the previous subsection and the Newton method with Moore-Penrose pseudo inverse. Hence, we present the later approach here and its relationship to Algorithm \ref{algorithm 3}.


For an arbitrary matrix $A \in \mathbf{R}^{m \times n}$, its \emph{Moore-Penrose pseudo inverse} (see, e.g., \cite{golub96}) is defined by
$$A^+ := V \Sigma^+ U^\top,  $$
where $V \Sigma^+ U^\top$ represents a singular value decomposition of $A$, where
$\Sigma^+$ corresponds to the pseudo-inverse of $\Sigma$ that can be given by
\begin{equation*}\label{Sigpinv}
\Sigma^+= \mbox{diag }\left(\frac{1}{\sigma_1}, \,\frac{1}{\sigma_2},\, \ldots,\, \frac{1}{\sigma_r}, \, 0, \, \ldots, \, 0\right) \;\; \mbox{ with }  \;\; r=\mbox{rank }(A).
\end{equation*}
if $A$ has full column rank, we have an additional property that
$$A^+ :=                 \left(A^T A\right)^{-1} A^\top .$$

Based on this definition, an iteration of the Newton method with pseudo inverse for system \eqref{bilevncp21} can be then stated as
\begin{equation}
z^{k+1}=z^k - \nabla \Upsilon(z^k)^{+} \Upsilon (z^k). \label{Newtpseudo}
\end{equation}
We are now going to refer to \eqref{Newtpseudo} as iteration of the \emph{Pseudo-Newton method}.
The Pseudo-Newton method for bilevel programming can be defined in the same fashion as Algorithm \ref{algorithm 3} with the difference that direction would be given by
$d^k =- \nabla \Upsilon^\lambda(z^k)^+ \Upsilon^\lambda(z^k)$.
Clearly, the Pseudo-Newton method is always well-defined, unlike the Gauss-Newton method, and hence, it will produce some result in the case when the Gauss-Newton method diverges  \cite{fletcher68}.
Based on this general behaviour and interplay between the two approaches, we will be comparing them in the numerical section. For details on the convergence of Newton-type methods with pseudo-inverse, the interested reader in referred to \cite{gatilov14}.

\section{Smoothing Gauss-Newton method}\label{GnewtSmoothed}

In this section, we relax the strict complementarity assumption, considering the fact that it often fails for many problems as illustrated in the previous section. However, to ensure the smoothness of the function $\Upsilon^\lambda$ \eqref{bilevncp21}, the Fischer-Burmeister function is replaced with the \emph{smoothing Fischer-Burmeister function} (see \cite{kanzow1996}) defined by 
\begin{equation} \label{FischerBurmsmooth}
\phi^\mu_{g_j} (x, y, u) :=\sqrt{u^2_j+g_j(x, y)^2 + 2\mu} - u_j+g_j(x, y) \;\;\mbox{ for }\;\; j=1, \ldots, p,
\end{equation}
where the perturbation parameter $\mu >0$ helps to guaranty its differentiability at points $(x, y, u)$ satisfying $u_j = g_j(x,y)=0$. It is well-known (see latter reference) that
\begin{equation}\label{equivalent-ce}
\phi^\mu_{g_j} (x, y, u) = 0 \;\; \Longleftrightarrow \;\; \left[ u_j>0,\; -g_j (x,y) >0, \; -u_j g_j(x,y) = \mu\right].
\end{equation}
The smoothing system of optimality conditions becomes
\begin{align}
\Upsilon^{\lambda}_\mu (z) :=  \left(\begin{array}{rr}  \nabla_x F(x, y)+\nabla_x g(x, y)^T (u - \lambda  w)
+ \nabla_x G(x,y)^T v \\
  \nabla_y F(x, y) +\nabla_y g(x, y)^T (u-\lambda w)+ \nabla_y  G(x,y)^T v\\
  \nabla_y f(x, y) + \nabla_y g(x, y)^T w \\
 \sqrt{u^2+g(x, y)^2 + 2\mu} - u+g(x, y) \\
  \sqrt{v^2+G(x,y)^2 + 2\mu } - v+G(x,y) \\
  \sqrt{w^2+g(x, y)^2  + 2\mu} - w+g(x, y) \end{array} \right) =
0, \label{bilevncp2222}
\end{align}
following the convention in \eqref{hunam}, where $\mu$ is a vector of appropriate dimensions with sufficiently small positive elements.
Under the assumption that all the functions involved in problem \eqref{initialbilev} are continuously differentiable, $\Upsilon^{\lambda}_\mu$ is also a continuously differentiable function for any $\lambda > 0$ and $\mu>0$. Additionally, we can easily check that
$$
\|\Upsilon^{\lambda}_{\mu} (z) - \Upsilon^{\lambda} (z)\| \longrightarrow 0 \;\; \mbox{ as } \;\; \mu \downarrow 0.
$$
Following the smoothing scheme discussed, for example, in \cite{QiSunAsurvey}, our aim is to consider a sequence $\{\mu_k\}$ decreasing to $0$ such that equation \eqref{bilevncp21} is approximately solved:
$$
\Upsilon^{\lambda}_{\mu^k} (z)=0, \;\;\; k = 0, 1, \ldots
$$
for a fixed value of $\lambda >0$. Hence, we consider the following algorithm for system \eqref{bilevncp2222}:
\begin{alg}
Smoothing Gauss-Newton Method for Bilevel Optimization
\label{algorithm 3s}
\begin{algorithmic}
 \STATE \textbf{Step 0}: Choose $\lambda >0$, $\mu_0 \in (0,1)$, $z^0:=(x^0,y^0,u^0,v^0,w^0)$, $\epsilon>0$, $K>0$, set $k:=0$.
 \STATE \textbf{Step 1}: If $\norm{\Upsilon^{\lambda}(z^k)}<\epsilon$ or $k\geq K$, then stop.
 \STATE \textbf{Step 2}: Calculate Jacobian $\nabla \Upsilon_{\mu_k}^\lambda(z^k)$ and find the direction
 \begin{equation}\label{Gauss-Newton-Smoothed}
d^k =-(\nabla \Upsilon_{\mu_k}^\lambda (z^k)^T \nabla \Upsilon_{\mu_k}^\lambda(z^k))^{-1} \nabla \Upsilon_{\mu_k}^\lambda(z^k)^T \Upsilon^\lambda(z^k).
 \end{equation}
 \STATE \textbf{Step 3}: Calculate $z^{k+1}=z^k + d^k$.
  \STATE \textbf{Step 4}: Update $\mu_{k+1} = \mu_k^{k+1}$.
  \STATE \textbf{Step 5}: Set $k:=k+1$ and go to Step 1.
\end{algorithmic}
\end{alg}
To implement Algorithm \ref{algorithm 3s} numerically, we compute the direction by solving
 \begin{equation}\label{Gauss-Newtonsdireq}
\nabla \Upsilon_{\mu_k}^\lambda (z^k)^T \nabla \Upsilon_{\mu_k}^\lambda(z^k) d^k =-\nabla \Upsilon_{\mu_k}^\lambda(z^k)^T \Upsilon^\lambda(z^k).
\end{equation}
The Pseudo-Newton algorithm for the smoothed optimality conditions \eqref{bilevncp2222} will be the same as Algorithm \ref{algorithm 3s} apart from \textbf{Step 2}, where the corresponding direction is given by
$$
d^k =-\nabla \Upsilon_{\mu_k}^\lambda (z^k)^+ \Upsilon^\lambda(z^k).
$$

Anther way to deal with the non-differentiability of the Fischer-Burmeister NCP-function is to introduce a generalized \emph{generalized Jacobian} concept for the system \eqref{bilevncp21}.
A semismooth Newton-type method for bilevel optimization following this type of approach is developed in \cite{newtonbilevel18}. But we are not taking this approach here.

Similarly to \eqref{deftau}--\eqref{compactJac}, we introduce the matrices
 $\mathcal{T}^{\mu}$, $\Gamma^{\mu}$, $\mathcal{A}^{\mu}$, $\mathcal{B}^{\mu}$, $\Theta^{\mu}$, and $\mathcal{K}^{\mu}$, where for instance, the pair
 $\left(\mathcal{T}^\mu, \Gamma^\mu\right)$ is defined by $\mathcal{T}^\mu := diag~\{\tau_1^\mu,..,\tau_p^\mu\}$ and $\Gamma^\mu := diag~\{\gamma_1^\mu,..,\gamma_p^\mu\}$ with
\begin{equation}\label{deftau-mu}
\tau^{\mu}_j:= \frac{g_j (x,y)}{\sqrt{u_j^2 + g_j(x,y)^2 + 2\mu}}+1 \; \mbox{ and } \; \gamma^{\mu}_j:= \frac{u_j}{\sqrt{u_j^2 + g_j(x,y)^2 +2\mu}} - 1, \;\, j = 1, \ldots p.
\end{equation}
With this notation, we can easily check that for $\lambda >0$ and $\mu>0$, the Jacobian of  $\Upsilon_\mu^\lambda$ is
\begin{equation} \label{compactJsmooth}
\nabla \Upsilon^\lambda_\mu (z) = \left[
  \begin{array}{cccc}
    \nabla^2 L^\lambda (z)  & \nabla g(x,y)^T & \nabla G(x,y)^T & -\lambda \nabla g(x,y)^T \\
     \nabla (\nabla_y\mathcal{L} (z)) & O & O & \nabla_y g(x,y)^T \\
    \mathcal{T}^\mu \nabla g(x,y) & \Gamma^\mu & O & O \\
     \mathcal{A}^\mu \nabla G(x,y) & O &  \mathcal{B}^\mu & O \\
    \Theta^\mu \nabla g(x,y) & O & O & \mathcal{K}^\mu \\
  \end{array}
\right]
\end{equation}

The fundamental difference between the framework here and the one in the previous section is that for the pair $(\tau_j^\mu, \gamma_j^\mu)$, $j=1, \ldots, p$, for instance, we have the strict inequalities
$$
(\tau_j^\mu-1)^2+(\gamma_j^\mu+1)^2 < 1, \;\; j=1, \ldots p
$$
instead of equalities in the context of $(\tau_j, \gamma_j)$, $j=1, \ldots, p$ \eqref{deftau}. The next lemma illustrates a further difference between the new coefficients in this section and the ones in  \eqref{deftau}.

\begin{lem}\label{coefsign}
For a point $z:=(x, y, u, v, w)$ and $\mu>0$ such that $\Upsilon^\lambda_\mu (z)=0$, it holds that
\[
\begin{array}{lll}
 \tau_j^\mu >0, & \gamma_j^\mu<0, & j=1, \ldots, p, \\
\alpha_j^\mu >0, & \beta_j^\mu<0, & j=1, \ldots, q, \\
\theta_j^\mu >0, & \kappa_j^\mu<0, & j=1, \ldots, p.
\end{array}
\]
\end{lem}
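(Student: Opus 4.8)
The plan is to read the sign information straight off the definition \eqref{deftau-mu} of the pair $(\tau_j^\mu,\gamma_j^\mu)$, using essentially only that the perturbation $2\mu$ is strictly positive; the hypothesis $\Upsilon^\lambda_\mu(z)=0$ serves mainly to anchor the statement in its intended context. Indeed, the last three blocks of \eqref{bilevncp2222} force $\phi^\mu_{g_j}(x,y,u)=0$, $\phi^\mu_{G_j}(x,y,v)=0$, and $\phi^\mu_{g_j}(x,y,w)=0$, so by the equivalence \eqref{equivalent-ce} one additionally has $u_j>0$ and $g_j(x,y)<0$ (and analogous strict inequalities for the other two pairs), although these are not needed for the argument below.

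First I would treat $\tau_j^\mu$. Since $\mu>0$ we have $u_j^2+2\mu>0$, hence $u_j^2+g_j(x,y)^2+2\mu>g_j(x,y)^2$ and therefore $\sqrt{u_j^2+g_j(x,y)^2+2\mu}>|g_j(x,y)|\ge -g_j(x,y)$. Dividing by the (strictly positive) square root gives $g_j(x,y)/\sqrt{u_j^2+g_j(x,y)^2+2\mu}>-1$, which by \eqref{deftau-mu} is exactly $\tau_j^\mu>0$. Symmetrically, because $\mu>0$ we also have $g_j(x,y)^2+2\mu>0$, hence $u_j^2+g_j(x,y)^2+2\mu>u_j^2$ and $\sqrt{u_j^2+g_j(x,y)^2+2\mu}>|u_j|\ge u_j$; dividing again by the square root yields $u_j/\sqrt{u_j^2+g_j(x,y)^2+2\mu}<1$, i.e. $\gamma_j^\mu<0$ by \eqref{deftau-mu}. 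This establishes the first line of the claimed inequalities.

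Finally, the pairs $(\alpha_j^\mu,\beta_j^\mu)$, $j=1,\ldots,q$, and $(\theta_j^\mu,\kappa_j^\mu)$, $j=1,\ldots,p$, are defined from $(G_j(x,y),v_j)$ and $(g_j(x,y),w_j)$ by the same formulas as $(\tau_j^\mu,\gamma_j^\mu)$ is defined from $(g_j(x,y),u_j)$, so the two displayed estimates apply verbatim after the obvious substitutions, giving $\alpha_j^\mu>0$, $\beta_j^\mu<0$ and $\theta_j^\mu>0$, $\kappa_j^\mu<0$. There is no genuine obstacle here — the computation is routine — and the only point worth keeping in view is that the strict positivity of $\mu$ is precisely what upgrades the inequalities $\sqrt{\cdot}\ge|g_j(x,y)|$ and $\sqrt{\cdot}\ge|u_j|$ to strict ones, which is exactly what distinguishes this situation from the unperturbed case \eqref{deftau} where $\tau_j$ or $\gamma_j$ may vanish.
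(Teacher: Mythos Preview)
Your proof is correct and in fact more streamlined than the paper's. The paper invokes the equivalence \eqref{equivalent-ce} to write $g_j(x,y)=-\mu/u_j$, substitutes this into \eqref{deftau-mu}, and then carries out a three-case analysis according to whether $u_j=\mu$, $u_j=\mu+\delta$, or $u_j=\mu-\delta$ for some $\delta>0$, simplifying the resulting radicals in each case to read off the signs. Your argument bypasses both the substitution and the case split: the single observation that $2\mu>0$ forces the radicand strictly above both $g_j(x,y)^2$ and $u_j^2$ is already enough to bound the two fractions in $(-1,1)$, and the conclusion drops out in one line. As you point out, this shows the hypothesis $\Upsilon^\lambda_\mu(z)=0$ is not actually used for the sign conclusions themselves (it only serves to place $z$ in the setting where the lemma is applied). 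The paper's route has the minor by-product of making the dependence on $u_j$ and $\mu$ more explicit, but for the purpose of the lemma your direct estimate is cleaner and loses nothing.
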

\begin{proof}
We prove that  $\tau_j^\mu >0$ and  $\gamma_j^\mu<0$ for $j=1, \ldots, p$; the other cases can be done similarly. For $j=1, \ldots, p$, it follows from  \eqref{equivalent-ce} that
 $g_j(x, y) = -\frac{\mu}{u_j}$. Hence, we can rewrite $\tau_j^\mu$ and $\gamma_j^\mu$ as
\begin{equation} \label{taunew}
\tau_j^\mu = 1-\frac{\mu}{u_j \sqrt{u_j^2 +\frac{\mu^2}{u_j^2}+ 2\mu}} \; \mbox{ and } \;
\gamma_j^\mu = \frac{u_j}{\sqrt{u_j^2 +\frac{\mu^2}{u_j^2}+ 2\mu}} - 1,
\end{equation}
respectively. Next, we consider the following three scenarios:\\

\textbf{Case 1} Suppose that $u_j = \mu$. Substituting this value into \eqref{taunew}, we get
\begin{align*}
\tau_j^\mu = 
1 - \frac{1}{\mu+1} > 0 \; \mbox{ and }\;
\gamma_j^\mu = 
 \frac{\mu}{\mu+1} - 1 < 0 \; \mbox{ as } \; \mu >0.
\end{align*}

\textbf{Case 2} Suppose that $u_j = \mu + \delta$ for some $\delta>0$ and substituting this in \eqref{taunew}  leads to
$$\tau_j^\mu = 1 - \frac{\mu}{(\mu+\delta)\sqrt{(\mu+\delta)^2+\frac{\mu^2}{(\mu+\delta)^2}+2\mu}} = 1 - \frac{1}{\sqrt{\frac{(\mu+\delta)^4}{\mu^2}+1+2\frac{(\mu+\delta)^2}{\mu}}} > 0,$$
$$\gamma_j^\mu = \frac{\mu+\delta}{\sqrt{(\mu+\delta)^2+\frac{\mu^2}{(\mu+\delta)^2}+2\mu}} - 1 = \frac{1}{\sqrt{1+\frac{\mu^2}{(\mu+\delta)^4}+2\frac{\mu}{(\mu+\delta)^2}}} - 1  < 0.$$

\textbf{Case 3} Finally, suppose that $u_j = \mu - \delta$ for some $\delta>0$. Then substituting this in \eqref{taunew},
$$\tau_j^\mu = 1 - \frac{\mu}{(\mu-\delta)\sqrt{(\mu-\delta)^2+\frac{\mu^2}{(\mu-\delta)^2}+2\mu}} = 1 - \frac{1}{\sqrt{\frac{(\mu-\delta)^4}{\mu^2}+1+2\frac{(\mu-\delta)^2}{\mu}}} > 0,$$
$$\gamma_j^\mu = \frac{\mu-\delta}{\sqrt{(\mu-\delta)^2+\frac{\mu^2}{(\mu-\delta)^2}+2\mu}} - 1 = \frac{1}{\sqrt{1+\frac{\mu^2}{(\mu-\delta)^4}+2\frac{\mu}{(\mu-\delta)^2}}} - 1  < 0.$$
Note that $u_j=\mu-\delta >0$ for in  the third case; this helps to ensure that $\mu-\delta = \sqrt{(\mu-\delta)^2}$. 
\end{proof}

Next, we use this lemma to provide a condition ensuring that the matrix $\nabla \Upsilon_\mu^\lambda(z)^T \nabla \Upsilon_\mu^\lambda (z)$ is nonsingular.  This will allow the smoothed Gauss-Newton step \eqref{Gauss-Newton-Smoothed} to be well-defined.
As in the previous section, it suffices to show that the columns of $\nabla \Upsilon_\mu^\lambda (\bar{z})$ are linearly independent.

\begin{thm}\label{GaussNewtLinIndeps}
For a point $\bar{z} :=(\bar{x},\bar{y},\bar{u},\bar{v},\bar{w})$ verifying \eqref{bilevncp2222} for some $\mu>0$ and $0 < \lambda < \frac{\kappa_j^\mu}{\theta_j^\mu} \frac{\tau_j^\mu}{\gamma_j^\mu}$, $j=1, \ldots, p$, suppose that $\nabla^2 L^\lambda (\bar{z})$ is positive definite. Then, the columns of the matrix $\nabla \Upsilon_\mu^\lambda (\bar{z})$ are linearly independent.
\end{thm}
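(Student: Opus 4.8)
The plan is to follow the same scheme as in the proof of Theorem~\ref{GaussNewtLinIndep}, which turns out to be simpler here: the smoothing makes every diagonal block of \eqref{compactJsmooth} invertible, so neither the LLICQ nor the ULICQ is needed, and the last block row will not even be used.

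First I would take an arbitrary $d=(d_1^\top,d_2^\top,d_3^\top,d_4^\top)^\top$ with $d_1\in\mathbb{R}^{n+m}$, $d_2,d_4\in\mathbb{R}^p$, $d_3\in\mathbb{R}^q$ satisfying $\nabla\Upsilon^\lambda_\mu(\bar z)d=0$, and use the block form \eqref{compactJsmooth} to write out the analogues of \eqref{indep1}--\eqref{indep5} with $\mathcal{T},\Gamma,\mathcal{A},\mathcal{B},\Theta,\mathcal{K}$ replaced by the perturbed matrices $\mathcal{T}^\mu,\Gamma^\mu,\mathcal{A}^\mu,\mathcal{B}^\mu,\Theta^\mu,\mathcal{K}^\mu$. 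Reading the three middle block rows componentwise and invoking Lemma~\ref{coefsign} (so that $\tau_j^\mu,\alpha_j^\mu,\theta_j^\mu>0$ and $\gamma_j^\mu,\beta_j^\mu,\kappa_j^\mu<0$, and in particular none of these six quantities vanishes), I can solve explicitly for the multiplier parts, namely $d_{2_j}=-(\tau_j^\mu/\gamma_j^\mu)\,\nabla g_j(\bar x,\bar y)^\top d_1$, $d_{3_j}=-(\alpha_j^\mu/\beta_j^\mu)\,\nabla G_j(\bar x,\bar y)^\top d_1$, and $d_{4_j}=-(\theta_j^\mu/\kappa_j^\mu)\,\nabla g_j(\bar x,\bar y)^\top d_1$.

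Next I would multiply the first block row by $d_1^\top$ and substitute these expressions, reducing the identity to
\[
d_1^\top\nabla^2 L^\lambda(\bar z)\,d_1+\sum_{j=1}^p\Big(\lambda\tfrac{\theta_j^\mu}{\kappa_j^\mu}-\tfrac{\tau_j^\mu}{\gamma_j^\mu}\Big)\big(\nabla g_j(\bar x,\bar y)^\top d_1\big)^2-\sum_{j=1}^q\tfrac{\alpha_j^\mu}{\beta_j^\mu}\big(\nabla G_j(\bar x,\bar y)^\top d_1\big)^2=0.
\]
By Lemma~\ref{coefsign} the quantity $-\alpha_j^\mu/\beta_j^\mu$ is positive; and since $\theta_j^\mu/\kappa_j^\mu<0$, multiplying the hypothesis $0<\lambda<(\kappa_j^\mu/\theta_j^\mu)(\tau_j^\mu/\gamma_j^\mu)$ through by $\theta_j^\mu/\kappa_j^\mu$ and reversing the inequality yields exactly $\lambda\,\theta_j^\mu/\kappa_j^\mu-\tau_j^\mu/\gamma_j^\mu>0$ for each $j$. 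Hence the displayed equation is $d_1^\top\nabla^2 L^\lambda(\bar z)d_1$ plus a sum of nonnegative terms, forcing $d_1^\top\nabla^2 L^\lambda(\bar z)d_1=0$, and positive definiteness of $\nabla^2 L^\lambda(\bar z)$ gives $d_1=0$. Plugging $d_1=0$ back into the componentwise middle rows leaves $\gamma_j^\mu d_{2_j}=0$, $\beta_j^\mu d_{3_j}=0$, $\kappa_j^\mu d_{4_j}=0$, so $d_2=d_3=d_4=0$ since none of those coefficients is zero; thus $d=0$ and the columns of $\nabla\Upsilon^\lambda_\mu(\bar z)$ are linearly independent.

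I expect the only delicate point to be the sign bookkeeping when rearranging the $\lambda$-bound — one must track carefully that $\tau_j^\mu/\gamma_j^\mu$, $\theta_j^\mu/\kappa_j^\mu$ and $\alpha_j^\mu/\beta_j^\mu$ are all negative, which is precisely the content of Lemma~\ref{coefsign}. Everything else is a direct transcription of the argument for Theorem~\ref{GaussNewtLinIndep}, and in the write-up it is worth emphasizing that, in contrast with the strict-complementarity case, the last block row (the analogue of \eqref{indep5}) plays no role and no constraint qualification is needed.
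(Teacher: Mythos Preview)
Your proposal is correct and follows essentially the same approach as the paper: write out the block system, use Lemma~\ref{coefsign} to invert the diagonal coefficients, multiply the first block row by $d_1^\top$, substitute, and read off a sum of nonnegative terms. The only cosmetic difference is that the paper substitutes in terms of $d_{2_j},d_{3_j}$ (using the relation $d_{4_j}=\tfrac{\theta_j^\mu\gamma_j^\mu}{\kappa_j^\mu\tau_j^\mu}d_{2_j}$ to combine the two $g$-sums) while you substitute directly in terms of $(\nabla g_j^\top d_1)^2$ and $(\nabla G_j^\top d_1)^2$; the resulting scalar identities are algebraically equivalent, and your observation that the row corresponding to \eqref{undep5s1} is never used is also implicit in the paper's argument.
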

\begin{proof}
Similarly to the proof of Theorem \ref{GaussNewtLinIndep}, $\nabla \Upsilon_\mu^\lambda (\bar{z})\left(d^\top_1,
                                                                                                   d^\top_2,
                                                                                                   d^\top_3,
                                                                                                   d^\top_4\right)^\top=0$ is equivalent to
\begin{align}
\nabla^2 L^\lambda (\bar{z}) d_1 + \nabla g(\bar{x}, \bar{y})^T d_2 +  \nabla G(\bar{x}, \bar{y})^T d_3   -\lambda \nabla g(\bar{x}, \bar{y})^T d_4 = 0, \label{undep1s1} \\
\tau_j^\mu \nabla g_j(\bar{x}, \bar{y})^T d_1 +  \gamma_j^\mu d_{2_j} = 0,  \label{undep2s1} \\
 \alpha_j^\mu \nabla G_j(\bar{x}, \bar{y})^T d_1 + \beta_j^\mu d_{3_j} = 0,\label{undep3s1} \\
 \theta_j^\mu \nabla g_j(\bar{x}, \bar{y})^T d_1   +  \kappa_j^\mu d_{4_j} = 0, \label{undep4s1} \\
\nabla (\nabla_y \mathcal{L}(\bar{z})) d_1 +  \nabla_y g(\bar{x}, \bar{y})^T d_4 = 0, \label{undep5s1}
\end{align}
where $j=1,...,p$ in \eqref{undep2s1} and \eqref{undep4s1}, while $j=1,...,q$ in \eqref{undep3s1}.
Thanks to  Lemma \ref{coefsign}, we can rewrite equations \eqref{undep2s1}, \eqref{undep3s1},  and \eqref{undep4s1} as
\begin{equation}\label{Yola1}
\nabla g_j(\bar{x}, \bar{y})^\top d_1 = - \frac{\gamma_j^\mu}{\tau_j^\mu} d_{2_j}, \;\, \nabla G_j(\bar{x}, \bar{y})^\top d_1 = - \frac{\beta_j^\mu}{\alpha_j^\mu} d_{3_j}, \;\, \mbox{ and } \;\; \nabla g_j(\bar{x}, \bar{y})^\top d_1 = - \frac{\kappa_j^\mu}{\theta_j^\mu} d_{4_j},
\end{equation}
respectively, with $- \frac{\gamma_j^\mu}{\tau_j^\mu}>0$, $-\frac{\beta_j^\mu}{\alpha_j^\mu} >0$, and $- \frac{\kappa_j^\mu}{\theta_j^\mu}>0$. Now, let us multiply \eqref{undep1s1} by $d^\top_1$:
\begin{equation}
d_1^T \nabla^2 L^\lambda (\bar{z}) d_1 + d_1^\top \nabla g(\bar{x}, \bar{y})^\top d_2 +  d_1^\top \nabla G(\bar{x}, \bar{y})^\top d_3   - \lambda d_1^\top \nabla g(\bar{x}, \bar{y})^\top d_4 = 0. \label{mainundep1s}
\end{equation}
Using the results above, the equation \eqref{mainundep1s} can be written as
\begin{equation}
d_1^T \nabla^2 L^\lambda (\bar{z}) d_1 + \sum_{j=1}^p \left(- \frac{\gamma_j^\mu}{\tau_j^\mu}\right) d_{2_j}^2 +  \sum_{j=1}^q  \left(- \frac{\beta_j^\mu}{\alpha_j^\mu}\right) d_{3_j}^2   - \lambda  \sum_{j=1}^p \left(- \frac{\kappa_j^\mu}{\theta_j^\mu}\right) d_{4_j}^2  = 0. \label{mainundep2s}
\end{equation}
Furthermore, it follows from the first and last items of \eqref{Yola1} that
\begin{equation}\label{d4jd2j}
d_{4_j} =  - \frac{\theta_j^\mu}{\kappa_j^\mu}\nabla g_j(\bar{x}, \bar{y})^\top d_1 =  \frac{\theta_j^\mu \gamma_j^\mu}{\kappa_j^\mu \tau_j^\mu} d_{2_j}.
\end{equation}
Substituting \eqref{d4jd2j} into \eqref{mainundep2s}
\begin{equation}
d^\top_1 \nabla^2 L^\lambda (\bar{z}) d_1 + \sum_{j=1}^p \left(- \frac{\gamma_j^\mu}{\tau_j^\mu}\right) d_{2_j}^2 +  \sum_{j=1}^q  \left(- \frac{\beta_j^\mu}{\alpha_j^\mu}\right) d_{3_j}^2   - \lambda \sum_{j=1}^p \left(- \frac{\kappa_j^\mu}{\theta_j^\mu}\right) \left(\frac{\theta_j^\mu \gamma_j^\mu}{\kappa_j^\mu \tau_j^\mu}\right)^2 d_{2_j}^2  = 0.
\end{equation}
Rearranging this equation, we get
\begin{equation}\label{smfinal}
d_1^T \nabla^2 L^\lambda (\bar{z}) d_1 +  \sum_{j=1}^q  \left(- \frac{\beta_j^\mu}{\alpha_j^\mu}\right) d_{3_j}^2   + \sum_{j=1}^p  \left(1 - \lambda  \frac{\theta_j^\mu}{\kappa_j^\mu} \frac{\gamma_j^\mu}{\tau_j^\mu} \right)
 \left(-\frac{\gamma_j^\mu}{\tau_j^\mu}\right) d_{2_j}^2 =0.
\end{equation}
Then with the result in Lemma \ref{coefsign}, under  the assumptions  that $ \nabla^2 L^\lambda (\bar{z})$ is positive definite and $\lambda < \frac{\kappa_j^\mu}{\theta_j^\mu} \frac{\tau_j^\mu}{\gamma_j^\mu}$ for $j=1, \ldots, p$,  equation \eqref{smfinal} is the sum of non-negative terms, which can only be a sum of zeros if all components of $d_1, d_2$ and $d_3$ are zeros.
Since all components of $d_2$ are zeros, we can look back to \eqref{d4jd2j} or \eqref{undep4s1}  to deduce that $d_{4_j} = 0$ for  $j=1, \ldots p$, completing the proof.
\end{proof}

It is important to note that the assumption that $\lambda < \frac{\kappa_j^\mu}{\theta_j^\mu} \frac{\tau_j^\mu}{\gamma_j^\mu}$ does not necessarily conflict with the requirement that $\lambda$ be strictly positive, as due to Lemma \ref{coefsign}, we have $\frac{\kappa_j^\mu}{\theta_j^\mu} \frac{\tau_j^\mu}{\gamma_j^\mu}>0$. In Subsection \ref{SecLamassum}, a numerical analysis of this condition is conducted. Next, provide an example of bilevel program, where the assumptions made in the Theorem \ref{GaussNewtLinIndeps} can be satisfied.

\begin{exm}\label{ExampIndepAssumSmooth}
We consider an instance of problem \eqref{initialbilev} taken from the BOLIB Library \cite{bolib17} with
$$
\begin{array}{lll}
 \begin{array}{l}
 F(x, y) := x^2 + (y_1+y_2)^2,\\
 f(x, y) := y_1,
 \end{array}  &
 G(x, y) := -x+0.5,
  & g(x, y) :=  \left(\begin{array}{c}  -x-y_1-y_2+1\\
-y  \end{array} \right).
\end{array}
$$
The point
$\bar{z}=(\bar{x}, \,\bar{y}_1, \, \bar{y}_2, \, \bar{u}_1, \, \bar{u}_2, \, \bar{u}_3, \, \bar{v}, \, \bar{w}_1,\, \bar{w}_2, \, \bar{w}_3)=(0.5, \, 0, \, 0.5, \, 1, \, \lambda, \, 0, \, 0, \, 0, \, 1, \, 0)$ satisfies equation \eqref{bilevncp21} for any $\lambda > 0$.
Strict complementarity does not hold at this point as $(\bar v , G(\bar x, \bar y) ) =(0,0)$ and  $(\bar w_1 , g_1(\bar x, \bar y) ) =(0,0)$.
We observe that $\nabla^2 L^\lambda (\bar{z})=2e$, where $e$ is the identity matrix of $\mathbb{R}^{3\times 3}$, is positive definite.
As for the conditions $\lambda < \frac{\kappa_j^\mu}{\theta_j^\mu} \frac{\tau_j^\mu}{\gamma_j^\mu}$, $j=1,2,3$, they hold for any value of $\lambda$ such that
\[
0<\lambda < \min~\left\{\frac{1}{1-1/(2  \mu + 1)^{1/2}}, \;\;   \frac{1 - 1/(2\mu + 1)^{1/2}}{1  - \lambda /(\lambda^2 + 2 \mu)^{1/2}}, \;\; 1\right\}
\]
This is automatically the case if, for example, we set $\mu=2\times10^{-2}$ and $\lambda=10^{-2}$. \qed
\end{exm}

There is at least one other way to show that $\nabla \Upsilon_\mu^\lambda(\bar z)^T \nabla \Upsilon_\mu^\lambda (\bar z)$ is nonsingular. The approach is based on the structure of the matrix, as it will be clear in the next result. To proceed, we need the following two assumptions.
\begin{asm}\label{rowassum}
Each row of the following matrix is a nonzero vector:
\[
\left[\nabla^2 L^\lambda (z)^T \quad \nabla (\nabla_y\mathcal{L} (z))^T   \quad \nabla g(x,  y)^T \quad \nabla G(x, y)^T\right].
\]
\end{asm}
\begin{asm}\label{diagassum}
For $\lambda>0$ and $\mu>0$, the diagonal elements of the matrix $\nabla \Upsilon_\mu^\lambda(z)^T \nabla \Upsilon_\mu^\lambda (z)$
dominate the other terms row-wise; i.e., 
$a_{ii} > \sum^{N}_{j=1, \; j\neq i} |a_{ij}|$ for $i=1, \ldots, N$, where $a_{ij}$ denotes the element in the cell $(i, j)$ of  $\nabla \Upsilon_\mu^\lambda(z)^T \nabla \Upsilon_\mu^\lambda (z)$ for $i=1, \ldots, N$ and $j=1, \ldots, N$.
\end{asm}

%

\begin{lem}\label{diagdom}
Let Assumption \ref{rowassum} hold at the point $z :=(x, y, u, v, w)$. Then for any $\lambda >0$ and $\mu >0$, the diagonal elements of the matrix $\nabla \Upsilon_\mu^\lambda(z)^T \nabla \Upsilon_\mu^\lambda (z)$ are strictly positive.
\end{lem}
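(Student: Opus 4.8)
The plan is to use the elementary identity that for any real matrix $A$ the $i$-th diagonal entry of $A^\top A$ equals $\sum_k A_{ki}^2$, i.e. the squared Euclidean norm of the $i$-th column of $A$; hence that entry is strictly positive exactly when the $i$-th column of $A$ is not the zero vector. Taking $A:=\nabla\Upsilon_\mu^\lambda(z)$, a matrix with $N=n+m+2p+q$ columns that split into blocks indexed by the variables $(x,y)$, $u$, $v$, and $w$, it therefore suffices to verify that none of those columns vanishes, and this will be read off directly from the block form \eqref{compactJsmooth}.

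First I would record that for $\mu>0$ all the scalars $\tau_j^\mu,\theta_j^\mu$ ($j=1,\ldots,p$) and $\alpha_j^\mu$ ($j=1,\ldots,q$) are strictly positive and $\gamma_j^\mu,\kappa_j^\mu,\beta_j^\mu$ strictly negative, hence in particular nonzero; and, unlike in Lemma \ref{coefsign}, this holds for every $z$, without assuming $\Upsilon_\mu^\lambda(z)=0$. Indeed, $2\mu>0$ forces $|g_j(x,y)|<\sqrt{u_j^2+g_j(x,y)^2+2\mu}$ and $|u_j|<\sqrt{u_j^2+g_j(x,y)^2+2\mu}$, so the quotients appearing in \eqref{deftau-mu} lie strictly between $-1$ and $1$, and the same estimate applies to the pairs defining $(\alpha_j^\mu,\beta_j^\mu)$ and $(\theta_j^\mu,\kappa_j^\mu)$. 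Consequently the diagonal matrices $\mathcal{T}^\mu,\Theta^\mu,\mathcal{A}^\mu$ are nonsingular, while $\Gamma^\mu,\mathcal{K}^\mu,\mathcal{B}^\mu$ have no zero diagonal entry.

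Then I would go through the columns block by block. A column indexed by a component of $u$, say the $j$-th, carries $\gamma_j^\mu$ in the $\Gamma^\mu$ block of \eqref{compactJsmooth} and is therefore nonzero; likewise the $j$-th $v$-column carries $\beta_j^\mu\ne0$ and the $j$-th $w$-column carries $\kappa_j^\mu\ne0$, so all $2p+q$ of these columns are nonzero for $\mu>0$ with no further hypothesis. A column indexed by the $i$-th component of $(x,y)$ stacks the $i$-th columns of $\nabla^2 L^\lambda(z)$ and of $\nabla(\nabla_y\mathcal{L}(z))$ together with $\mathcal{T}^\mu$, $\mathcal{A}^\mu$, $\Theta^\mu$ applied to the $i$-th columns of $\nabla g(x,y)$ and $\nabla G(x,y)$; since $\mathcal{T}^\mu,\mathcal{A}^\mu,\Theta^\mu$ are nonsingular, this column vanishes if and only if the $i$-th columns of $\nabla^2 L^\lambda(z)$, $\nabla(\nabla_y\mathcal{L}(z))$, $\nabla g(x,y)$ and $\nabla G(x,y)$ all vanish, equivalently if and only if the $i$-th row of the matrix in Assumption \ref{rowassum} is the zero vector, which that assumption forbids. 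Hence every column of $\nabla\Upsilon_\mu^\lambda(z)$ is nonzero, and the opening identity yields that every diagonal entry of $\nabla\Upsilon_\mu^\lambda(z)^\top\nabla\Upsilon_\mu^\lambda(z)$ is strictly positive, as claimed.

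I do not expect a genuine obstacle here; the argument is essentially bookkeeping on the block structure \eqref{compactJsmooth}. The only points needing a little care are to align the ``rows'' of the matrix in Assumption \ref{rowassum} with the $(x,y)$-indexed columns of the Jacobian, and to re-derive the nonvanishing of the smoothing coefficients directly from \eqref{deftau-mu} rather than quoting Lemma \ref{coefsign}, whose hypothesis $\Upsilon_\mu^\lambda(z)=0$ is not in force in the present lemma.
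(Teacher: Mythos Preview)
Your proof is correct and follows the same approach as the paper: both compute the diagonal entries of $\nabla\Upsilon_\mu^\lambda(z)^\top\nabla\Upsilon_\mu^\lambda(z)$ as squared column norms of the Jacobian and then show each column is nonzero using the block structure \eqref{compactJsmooth} together with Assumption \ref{rowassum}. Your treatment of the smoothing coefficients is in fact cleaner than the paper's: the paper cites Lemma \ref{coefsign} (which assumes $\Upsilon_\mu^\lambda(z)=0$) to obtain their nonvanishing, whereas you correctly observe that for $\mu>0$ the quotients in \eqref{deftau-mu} lie strictly in $(-1,1)$ at every point $z$, so $\gamma_j^\mu,\beta_j^\mu,\kappa_j^\mu<0$ and $\tau_j^\mu,\alpha_j^\mu,\theta_j^\mu>0$ without that extra hypothesis.
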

\begin{proof}
Considering the Jacobian matrix in \eqref{compactJsmooth}, its transpose can be written as
\begin{equation*}
\nabla \Upsilon^\lambda_\mu (z)^T = \left[
  \begin{array}{ccccc}
    \nabla^2 L^\lambda (z)^T  & \nabla (\nabla_y\mathcal{L} (z))^T &   \nabla g(x,y)^T \mathcal{T}^{\mu T}  &   \nabla G(x,y)^T \mathcal{A}^{\mu T} &   \nabla g(x,y)^T \Theta^{\mu T} \\
    \nabla g(x,y) & O & \Gamma^{\mu T} & O & O \\
    \nabla G(x,y) & O & O & \mathcal{B}^{\mu T} & O \\
   -\lambda \nabla g(x,y) & \nabla_y g(x,y) & O & O & \mathcal{K}^{\mu T} \\
  \end{array}
\right].
\end{equation*}
Denote by $r_i$, $i=1, \ldots, 4$, respectively, the first, second, third, and fourth row-block of this matrix. 
Then the desired product can be represented as
\begin{equation*}
\nabla \Upsilon_\mu^\lambda(z)^T \nabla \Upsilon_\mu^\lambda (z)= \left[
  \begin{array}{ccccc}
  r_1 r_1^T & r_1 r_2 ^T & r_1 r_3^T & r_1 r_4^T \\
   r_2 r_1^T & r_2 r_2 ^T & r_2 r_3^T & r_2 r_4^T \\
    r_3 r_1^T & r_3 r_2 ^T & r_3 r_3^T & r_3 r_4^T \\
     r_4 r_1^T & r_4 r_2 ^T & r_4 r_3^T & r_4 r_4^T \\
   \end{array}
 \right].
\end{equation*}
Obviously, the diagonal elements of $\nabla \Upsilon_\mu^\lambda(z)^T \nabla \Upsilon_\mu^\lambda (z)$ are the diagonal elements of $r_1 r_1^T$, $r_2 r_2^T$, $r_3 r_3^T$, and $r_4 r_4^T$. We can check that for $j=1, \ldots, n+m$, a diagonal element of $r_1 r_1^T$ has the form
\begin{equation*}
  \begin{array}{lll}
 (r_1 r_1^T)_{jj} &=&  \sum_{k=1}^{n+m} \nabla_{j,k}^2 L^\lambda(z) ^T \nabla_{j,k}^2 L^\lambda (z)  + \sum_{k=1}^{m}  \nabla_j (\nabla_{y_k} \mathcal{L} (z))^T\nabla_j (\nabla_{y_k}\mathcal{L} (z))\\
                  & & +  \sum_{k=1}^{p} \nabla_j g_k(x,y)^T \nabla_j g_k(x,y)  (\tau_k^\mu)^2  +    \sum_{k=1}^{q}  \nabla G_k(x,y)^T  \nabla G_k(x,y) (\alpha_k^\mu)^2 \\
                  & & +  \sum_{k=1}^{p}  \nabla g_k(x,y)^T  \nabla g_k(x,y) (\theta_k^\mu)^2,
  \end{array}
\end{equation*}
where $\nabla_j$ stands for the $j^{th}$ element of $\nabla:=\left(\nabla_{x_1},...,\nabla_{x_n},\nabla_{y_1},...,\nabla_{y_m}\right)$ and $\nabla^2_{j,k}$ corresponds to an element in the $j^{th}$ row and $k^{th}$ column of
\begin{align*}
\nabla^2 : =  \left[
  \begin{array}{cccc} \nabla_{x_1 x_1} & \dots \nabla_{x_1 x_n} & \nabla_{x_1 y_1} & \dots\nabla_{x_1 y_m} \\
  \vdots & \ddots & \ddots & \vdots \\
\nabla_{x_n x_1} & \dots\nabla_{x_n x_n} & \nabla_{x_n y_1} & \dots\nabla_{x_n y_m} \\
\nabla_{y_1 x_1} & \dots\nabla_{y_1 x_n} & \nabla_{y_1 y_1} & \dots\nabla_{y_1 y_m} \\
\vdots & \ddots & \ddots & \vdots \\
\nabla_{y_m x_1} & \dots\nabla_{y_m x_n} & \nabla_{y_m y_1} & \dots\nabla_{y_m y_m}
\end{array}
\right].
\end{align*}
Combining Assumption \ref{rowassum} and Lemma \ref{coefsign}, it is clear that $(r_1 r_1^T)_{jj}> 0$ for $j=1, \ldots, n+m$.
Similarly, the diagonal elements of $r_2 r_2^T$, $r_3 r_3^T$, and $r_4 r_4^T$ can respectively be written as
\begin{equation*}
\begin{array}{lll}
(r_2 r_2^T)_{jj} = \nabla g_j (x,y) \nabla g_j(x,y)^T + (\gamma_j^\mu)^2 & \mbox{ for } & j=1,...,p,\\
(r_3 r_3^T)_{jj} = \nabla G_j (x,y) \nabla G_j(x,y)^T + (\beta_j^\mu)^2 & \mbox{ for } & j=1,...,q,\\
(r_4 r_4^T)_{jj} = \nabla g_j (x,y) \nabla g_j(x,y)^T + (\kappa_j^\mu)^2 & \mbox{ for } & j=1,...,p.
\end{array}
\end{equation*}
Thanks to Lemma \ref{coefsign}, it is also clear that these items are all strictly positive. 
\end{proof}

\begin{thm}\label{GaussNewtNonsing}
Let $z=(x, y, u, v, w)$ be a stationary point of the system \eqref{bilevncp2222} for some $\lambda >0$ and $\mu >0$. If Assumptions \ref{rowassum} and \ref{diagassum} are satisfied, then the matrix $\nabla \Upsilon_\mu^\lambda(z)^T \nabla \Upsilon_\mu^\lambda (z)$ is nonsingular.
\end{thm}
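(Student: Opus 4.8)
The plan is to observe that Assumptions \ref{rowassum} and \ref{diagassum}, combined with Lemma \ref{diagdom}, say precisely that $M := \nabla \Upsilon_\mu^\lambda(z)^T \nabla \Upsilon_\mu^\lambda (z)$ is a strictly row-wise diagonally dominant matrix with (strictly) positive diagonal, and then to invoke the classical Levy--Desplanques criterion (a consequence of the Gershgorin circle theorem): such a matrix is nonsingular. So the argument is short and essentially a citation plus a one-line contradiction.

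First I would apply Lemma \ref{diagdom}: since Assumption \ref{rowassum} holds at $z$ and $\lambda,\mu>0$, every diagonal entry $a_{ii}$ of $M$ is strictly positive for $i=1,\ldots,N$ (recall $N=n+m+2p+q$). This is what makes Assumption \ref{diagassum} non-vacuous. Then I would give the direct proof of nonsingularity: suppose $Mx=0$ for some $x=(x_1,\ldots,x_N)^T\neq 0$, and pick an index $i$ with $|x_i|=\max_{1\le j\le N}|x_j|>0$. The $i$-th row of $Mx=0$ gives $a_{ii}x_i=-\sum_{j\neq i}a_{ij}x_j$, hence
\[
a_{ii}\,|x_i| \;=\; \Bigl|\,\textstyle\sum_{j\neq i} a_{ij}x_j\,\Bigr| \;\le\; \textstyle\sum_{j\neq i} |a_{ij}|\,|x_j| \;\le\; |x_i|\,\textstyle\sum_{j\neq i} |a_{ij}| \;<\; a_{ii}\,|x_i|,
\]
where the strict inequality uses Assumption \ref{diagassum} together with $|x_i|>0$ and $a_{ii}>0$. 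This contradiction shows $\ker M=\{0\}$, i.e. $M=\nabla \Upsilon_\mu^\lambda(z)^T \nabla \Upsilon_\mu^\lambda (z)$ is nonsingular. (Equivalently, by Gershgorin's theorem every eigenvalue of $M$ lies in a disc $\{\zeta:|\zeta-a_{ii}|\le \sum_{j\neq i}|a_{ij}|\}$, and Assumption \ref{diagassum} forces each such disc to exclude $0$, so $0$ is not an eigenvalue of $M$.)

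The main obstacle: there is essentially no analytic difficulty — once Lemma \ref{diagdom} is in hand, the result is an immediate application of the standard diagonal-dominance nonsingularity criterion. The only point worth stressing is that Assumption \ref{diagassum} is a genuine structural condition on the problem data, and that it is consistent with strict positivity of the diagonal of $M$; ensuring this consistency is exactly the role played by Lemma \ref{diagdom}, so the proof should explicitly chain Lemma \ref{diagdom} $\Rightarrow$ positive diagonal $\Rightarrow$ meaningful diagonal dominance $\Rightarrow$ nonsingularity.
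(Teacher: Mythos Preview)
Your proposal is correct and follows essentially the same route as the paper: invoke Lemma \ref{diagdom} (via Assumption \ref{rowassum}) to get strictly positive diagonal entries, combine with Assumption \ref{diagassum} to obtain strict diagonal dominance, and then apply the Gershgorin/Levy--Desplanques criterion to conclude nonsingularity. The paper additionally notes symmetry of $M$ to state positive definiteness rather than mere nonsingularity, but the core argument is identical.
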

\begin{proof}
It is known that the matrix is positive definite if it is symmetric, its diagonal elements are strictly positive, and diagonal elements dominate elements of the matrix in the corresponding row.
This property is the consequence of the \emph{Gershgorin circle theorem}, which can be found in \cite[page 320]{golub96}.
As  $\nabla \Upsilon_\mu^\lambda(z)^T \nabla \Upsilon_\mu^\lambda (z)$ is symmetric, then combining Assumptions \ref{rowassum} and \ref{diagassum} to Lemma \ref{diagdom}, we have the result.
\end{proof}

Next, we provide an example where the assumptions required for Theorem \ref{GaussNewtNonsing} are satisfied.
\begin{exm}\label{ExampSmoothDempe}
We consider an instance of problem \eqref{initialbilev} taken from the BOLIB Library \cite{bolib17} with
$$
 \begin{array}{lll}
 F(x, y) := (x-1)^2 + y^2, &
  f(x, y) := x^2 y, &
   g(x, y) :=  y^2
  \end{array}
$$
and no upper-level constraint. For this problem the function $\Upsilon^{\lambda}$ \eqref{bilevncp21} can be written as
\[
\Upsilon^{\lambda} (z)= \left(2x-2, \;
  2y+2yu -2 \lambda y w, \;
  x^2 + 2 y w, \;
\sqrt{u^2+y^4} - u + y^2, \;
\sqrt{w^2+y^4} - w + y^2\right)^\top.
\]
The first item of note about this example is that the optimal solution $(\bar x, \bar y)=(1, 0)$ does not satisfy that the optimality conditions \eqref{kktbilev12}--\eqref{kktbilev62}, given that $\Upsilon^{\lambda} (\bar x, \bar y, \bar u, \bar w)\neq 0$ for any values of $\bar u$ and $\bar w$. However, Algorithm \ref{algorithm 3s} identifies the solution for $\lambda$ taking the values $0.6$, $0.7$ or $0.8$ with the smoothing parameter set to $\mu=10^{-11}$. Indeed, the convergence of the method seems to be justified as for this problem, we can easily check that for $\bar x =1$ and $\bar y=0$,
\[
\begin{array}{l}
\nabla^2 L^\lambda(\bar z) =
 2\mbox{diag}\left(1, \; 1 + \bar u - \lambda \bar{w}\right), \quad \nabla (\nabla_y \mathcal{L}(\bar z)) = ( 2 \;\; 2 \bar w), \quad  \nabla g(\bar x, \bar y) = (0 \;\; 0),\\[1ex]
 \Gamma^\mu =  \frac{\bar u}{\sqrt{\bar u^2 + 2 \mu}}- 1, \;\; \mbox{ and } \;\; \mathcal{K}^\mu = \frac{\bar w}{\sqrt{\bar w^2 + 2 \mu}}- 1
\end{array}
\]
and subsequently, we have the product
\begin{align*}
\nabla \Upsilon_\mu^\lambda(\bar z)^T \nabla \Upsilon_\mu^\lambda (\bar z) =
\left[
\begin{array}{cccc}
  8  &                        4 \bar w  & 0  &                               0 \\
 4 \bar w  &   4 \bar w^2 + (2 \bar u - 2 \lambda \bar w )^2 & 0  &                               0 \\
   0  &                          0  & \left(\frac{\bar u}{\sqrt{\bar{u}^2 + 2 \mu}} - 1\right)^2  &                               0 \\
   0  &                          0  & 0  & \left(\frac{\bar w}{\sqrt{\bar w^2 + 2 \mu}} - 1\right)^2
 \end{array}
 \right].
 \end{align*}
Hence, Assumption \ref{rowassum} is clearly satisfied and for Assumption \ref{diagassum} to hold, we need
\[
8 > 4 \bar w, \;\, 4 \bar w^2 + (2 \bar u - 2 \lambda \bar w)^2 >  4 \bar w,\;\, \left(\frac{\bar u}{\sqrt{\bar u^2 + 2 \mu}} - 1\right)^2 > 0, \;\, \left(\frac{\bar w}{\sqrt{\bar w^2 + 2 \mu}} - 1\right)^2 > 0.
\]
This holds for any $\mu>0$, $\lambda>0$, $\bar u > 0$, and $1 < \bar w < 2$. \qed
\end{exm}

We further note that the assumptions made in Theorem \ref{GaussNewtLinIndeps} hold for the problem in this example. Firstly, we observe that
$\nabla^2 L^\lambda(\bar z)$ is positive definite if $\lambda \bar w < \bar u + 1$. Subsequently, we can check that both assumptions of Theorem \ref{GaussNewtLinIndeps} are satisfied if
\[
\lambda < \min \left\{\frac{\bar u + 1}{\bar w},    \frac{\left(\frac{\bar w}{\sqrt{\bar w^2 + 2 \mu}} -1 \right)} { \left(\frac{\bar u}{\sqrt{\bar u^2 + 2 \mu}} -1 \right)} \right\} \;\;\mbox{ with }\;\; \bar w \neq 0.
\]
For instance, choosing $\bar u = \sqrt{8} \times 10^{-6} $, $\bar w = 10^{-6}$, $\mu = 4\times 10^{-12}$, and $\lambda<2.25$ gives the result.

To conclude this section, we would like to analyse the \emph{Jacobian consistency} of $\Upsilon^\lambda$. Recall that according to  \cite{globandsuperlin98}, the Jacobian consistency property will hold for $\Upsilon^\lambda$ if this mapping is Lipschitz continuous and there exists a constant $\epsilon>0$ such that for any $z \in \mathbb{R}^N$ and $\mu \in \mathbb{R}_+$, we have
\begin{equation}\label{Jacobian consistency}
\norm{ \Upsilon^\lambda_\mu(z)- \Upsilon^\lambda(z)} \leq \mu \epsilon \;\, \mbox{ and }\;\, \lim_{\mu \downarrow 0}\mbox{dist}\left(\nabla  \Upsilon^\lambda_\mu (z), \;\partial_C  \Upsilon^\lambda(z)\right) = 0.
\end{equation}
Note that in \eqref{Jacobian consistency}, \emph{dist} represents the standard distance function while $\partial_C  \Upsilon^\lambda(z)^T$ denotes the C-subdifferential
\begin{equation} \label{fischercdiff}
 \partial_C  \Upsilon^\lambda(z)^T := \partial  \Upsilon^\lambda_1 (z) \times ... \times \partial  \Upsilon^\lambda_{N+m}(z),
\end{equation}
commonly used in this context; see, e.g., \cite{kanz999}. In \eqref{fischercdiff}, $N:=n+m+2p+q$ and $\partial  \Upsilon^\lambda_i$, $i=1, \ldots, N+m$ represents the subdifferential in the sense of Clarke. Note that $\partial_C  \Upsilon^\lambda(z)^T$ contains the generalized Jacobian in the sense of Clarke of the function $\Upsilon^\lambda$.
Roughly speaking, the Jacobian consistency property \eqref{Jacobian consistency} translates to a framework ensuring that when the smoothing parameter $\mu$ is getting close to zero, the Jacobian $\nabla \Upsilon_\mu^\lambda (z)$ converges to an element in the C-subdifferential $\partial_C\Upsilon^\lambda(z)$. This property is important in determining the accuracy of the smoothing method in Algorithm \ref{algorithm 3s}.

Based on \eqref{fischercdiff}, at all points $z:=(x, y, u, v, w)$ satisfying Assumption \ref{AssumStrictComp},
\begin{equation} \label{partialCphi}
\partial_C  \Upsilon^\lambda(z)^T := \left\{\nabla\Upsilon^\lambda(z)^T\right\},
\end{equation}
where $\nabla \Upsilon^\lambda(z)$ is defined by \eqref{compactJac}.
For the case when strict complementarity does not hold, elements of $\partial_C \Upsilon^\lambda(z)$ have the same structure as in \eqref{compactJac} with the only differences being in the terms $\tau_j$, $\gamma_j$, $\alpha_j$, $\beta_j$, $\theta_j$, and $\kappa_j$ for indices $j$ where strict complementarity does not hold. We still have $\partial \Upsilon^\lambda_i (z)$ is the same as $\nabla \Upsilon^\lambda_i(z)$ for rows $i=1,...,n+2m$.
To determine the remaining rows, consider
\[
\begin{array}{rcl}
\Omega_1 &:=& \{j: (u_j, g_j (x,y)) = (0,0)\},\\
\Omega_2 &:=&\{j: (v_j, G_j (x,y)) = (0,0)\},\\
\Omega_3&:=& \{j: (w_j, g_j (x,y)) = (0,0)\}.
\end{array}
\]
Obviously $\tau_j$ and $\gamma_j$ introduced in \eqref{deftau} are not well-defined for $j\in\Omega_1$.
Similarly, the same holds for $\alpha_j$ and $\beta_j$ for $j\in \Omega_2$ and $\theta_j$ and $\kappa_j$ for $j\in\Omega_3$.
Following the same procedure as in \cite[Proposition 2.1]{kanz999}, we define
\[
\begin{array}{l}
\tau_k := \zeta_k + 1, \;\, \gamma_k := \rho_k - 1 \, \mbox{ for some }\, (\zeta_k,\rho_k) \in \mathbb{R}^2 \, \mbox{ such that }\, \norm{(\zeta_k, \rho_k)} \leq 1 \, \mbox{ if }\, k\in \Omega_1,\\
  \alpha_k:=\sigma_k+1,\;\, \beta_k:=\delta_k-1 \, \mbox{ for some }\, (\sigma_k,\delta_k) \in \mathbb{R}^2 \, \mbox{ such that }\, \norm{(\sigma_k, \delta_k)} \leq 1 \, \mbox{ if }\, k\in \Omega_2,\\
\theta_k:=\iota_k+1,\;\, \kappa_k:=\eta_k-1 \, \mbox{ for some }\, (\iota_k,\eta_k) \in \mathbb{R}^2 \, \mbox{ such that }\, \norm{(\iota_k, \eta_k)} \leq 1 \, \mbox{ if }\, k\in \Omega_3.
\end{array}
\]
Since we do not assume strict complementarity here, then in contrast to Subsection \ref{SubSecInvertibility}, we have
\[
(\tau_j-1)^2+(\gamma_j+1)^2 \leq 1, \;\; (\alpha_j-1)^2+(\beta_j+1)^2 \leq 1, \;\; (\theta_j-1)^2+(\kappa_j+1)^2 \leq 1.
\]

\begin{thm}
For $\lambda >0$, the Jacobian consistency property holds for the approximation $\Upsilon_\mu^\lambda$ of $\Upsilon^\lambda$.
\end{thm}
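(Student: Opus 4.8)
The plan is to verify in turn the two requirements of the Jacobian consistency property \eqref{Jacobian consistency}, after first recording that $\Upsilon^\lambda$ is (locally) Lipschitz — which is in any case needed for the C-subdifferential $\partial_C\Upsilon^\lambda$ to be meaningful. For this I would observe that the first $n+2m$ components of $\Upsilon^\lambda$ are built from the gradients of $F,f,G,g$ together with terms linear in $(u,v,w)$, hence locally Lipschitz under the standing smoothness assumptions (so that these gradients are themselves locally Lipschitz), while the last $2p+q$ components compose the globally Lipschitz Fischer--Burmeister map $(a,b)\mapsto\sqrt{a^2+b^2}-a-b$ with the locally Lipschitz maps $z\mapsto(u_j,g_j(x,y))$, $z\mapsto(v_j,G_j(x,y))$ and $z\mapsto(w_j,g_j(x,y))$.

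For the first inequality in \eqref{Jacobian consistency}, note that $\Upsilon_\mu^\lambda$ and $\Upsilon^\lambda$ agree except on the Fischer--Burmeister rows, where the sole change is that $\sqrt{a^2+b^2}$ is replaced by $\sqrt{a^2+b^2+2\mu}$. The elementary estimate
\begin{equation*}
0\le\sqrt{t+2\mu}-\sqrt{t}=\frac{2\mu}{\sqrt{t+2\mu}+\sqrt{t}}\le\sqrt{2\mu},\qquad t\ge0,
\end{equation*}
applied with $t$ equal to $u_j^2+g_j(x,y)^2$, $v_j^2+G_j(x,y)^2$ and $w_j^2+g_j(x,y)^2$ respectively, then gives $\norm{\Upsilon_\mu^\lambda(z)-\Upsilon^\lambda(z)}\le c\sqrt{2\mu}$ with $c$ depending only on $p$ and $q$, which tends to $0$ as $\mu\downarrow0$ (and at the sharper rate $O(\mu)$ wherever the relevant pairs stay bounded away from the origin, since the denominator above is then bounded below).

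The substantive part is the limit $\lim_{\mu\downarrow0}\mathrm{dist}\bigl(\nabla\Upsilon_\mu^\lambda(z),\partial_C\Upsilon^\lambda(z)\bigr)=0$ for fixed $z=(x,y,u,v,w)$. Here I would use the description of $\partial_C\Upsilon^\lambda(z)$ set out in the paragraphs above: every element has the block structure of \eqref{compactJac}, equals $\nabla\Upsilon^\lambda(z)$ on the first $n+2m$ rows and on each Fischer--Burmeister row whose index lies outside $\Omega_1\cup\Omega_2\cup\Omega_3$, and on the remaining rows uses coefficients $(\tau_k,\gamma_k)=(\zeta_k+1,\rho_k-1)$ with $\norm{(\zeta_k,\rho_k)}\le1$ for $k\in\Omega_1$ (and analogously for $\Omega_2,\Omega_3$). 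Choose the particular element $M\in\partial_C\Upsilon^\lambda(z)$ obtained by setting all these free parameters to zero and using exact derivatives elsewhere; then $M$ is independent of $\mu$ and $\mathrm{dist}(\nabla\Upsilon_\mu^\lambda(z),\partial_C\Upsilon^\lambda(z))\le\norm{\nabla\Upsilon_\mu^\lambda(z)-M}$. To see that the right-hand side vanishes: for $k\in\Omega_1$ one has $u_k=g_k(x,y)=0$, so $\tau_k^\mu=1$ and $\gamma_k^\mu=-1$ for every $\mu>0$, which are exactly the entries of $M$ (the zero-parameter choice), and similarly for $\Omega_2,\Omega_3$; for a Fischer--Burmeister index $j\notin\Omega_1$ one has $u_j^2+g_j(x,y)^2>0$, whence $\tau_j^\mu\to\tau_j$ and $\gamma_j^\mu\to\gamma_j$ by continuity of $\mu\mapsto\sqrt{u_j^2+g_j(x,y)^2+2\mu}$ at $\mu=0$, with the analogous statements for the $G$- and $w$-blocks; and the first $n+2m$ rows of $\nabla\Upsilon_\mu^\lambda(z)$ carry no $\mu$ and already coincide with those of $M$. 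Summing these row-wise contributions, each weighted by the fixed gradients $\nabla g_j(x,y)$ or $\nabla G_j(x,y)$, gives $\norm{\nabla\Upsilon_\mu^\lambda(z)-M}\to0$, which together with the first two parts proves the theorem.

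I expect the main obstacle to be this third step — specifically, pinning down $\partial_C\Upsilon^\lambda(z)$ correctly at points where strict complementarity fails and checking that the smoothed coefficients $(\tau_k^\mu,\gamma_k^\mu)$, $(\alpha_k^\mu,\beta_k^\mu)$ and $(\theta_k^\mu,\kappa_k^\mu)$ are genuinely captured by the parametrized family describing the C-subdifferential, together with the careful row-by-row matching between $\nabla\Upsilon_\mu^\lambda(z)$ and the chosen $M$. By contrast, the Lipschitz property and the approximation estimate are routine.
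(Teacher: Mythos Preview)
Your proposal is correct and follows essentially the same route as the paper: both verify local Lipschitzness, bound $\norm{\Upsilon^\lambda_\mu(z)-\Upsilon^\lambda(z)}$ via the elementary estimate $\sqrt{t+2\mu}-\sqrt{t}\le\sqrt{2\mu}$ on the Fischer--Burmeister rows (the paper records the resulting constant $\epsilon=2\sqrt{2p}+\sqrt{2q}$), and then handle the Jacobian limit row-by-row, observing that the first $n+2m$ rows carry no $\mu$ while on the Fischer--Burmeister rows $\tau_j^\mu\to\tau_j$, $\gamma_j^\mu\to\gamma_j$ for $j\notin\Omega_1$ and $(\tau_k^\mu,\gamma_k^\mu)=(1,-1)$ for $k\in\Omega_1$, which is precisely the zero-parameter element of $\partial_C\Upsilon^\lambda(z)$. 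Your framing (pick $M\in\partial_C\Upsilon^\lambda(z)$ and bound the distance by $\norm{\nabla\Upsilon^\lambda_\mu(z)-M}$) is slightly more explicit than the paper's, but the argument is the same.
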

\begin{proof}
First of all, note that $\Upsilon^\lambda$ is locally Lipschitz continuous. Proceeding as
 in \cite[Corollary 2.4]{kanz999}, we can easily check that we have
\[
\norm{\Upsilon^\lambda_\mu(z) - \Upsilon^\lambda(z) } \leq  \epsilon \sqrt{\mu} \;\; \mbox{ with } \;\; \epsilon:= 2\sqrt{2p} + \sqrt{2q}.
\]
\begin{equation} \label{proveconsist}
\lim_{\mu \downarrow 0} \nabla \Upsilon^\lambda_{\mu } (z) =
\lim_{\mu \downarrow 0} \left[
  \begin{array}{cccc}
    \nabla^2 L^\lambda(z)  & \nabla g(x,y)^T & \nabla G(x,y)^T & -\lambda \nabla g(x,y)^T \\
     \nabla (\nabla_y\mathcal{L} (z)) & O & O & \nabla_y g(x,y)^T \\
    \mathcal{T}^\mu \nabla g (x,y) & \Gamma^\mu & O & O \\
     \mathcal{A}^\mu \nabla G (x,y) & O &  \mathcal{B}^\mu & O \\
    \Theta^\mu \nabla g (x,y) & O & O & \mathcal{K}^\mu \\
  \end{array}
\right],
\end{equation}
where it is easy to see that the first two rows of \eqref{proveconsist} are the same as the first two rows of \eqref{compactJac}, as these are not involving perturbation $\mu$. For the rest of Jacobian, we observe that
\begin{equation} \label{proveconsist2}
\lim_{\mu \downarrow 0} \left[ \tau_j^\mu \nabla g_j (x,y), \phantom{-} \gamma_j^\mu, \phantom{-} 0, \phantom{-} 0 \right]
=
\begin{cases}
 \left(\tau_j \nabla g_j (x,y) \phantom{-} \gamma_j \phantom{-} 0 \phantom{-} 0 \right)
 & \text{for } j \notin \Omega_1 \\
\left(\nabla g_j (x,y) \phantom{-} -1 \phantom{-} 0 \phantom{-} 0 \right) & \text{for } j\in \Omega_1
\end{cases}
\end{equation}
and similarly for $\lim_{\mu \downarrow 0} \big[ \big(\alpha_j^\mu \nabla G_j (x,y) \phantom{-} \beta_j^\mu \phantom{-} 0 \phantom{-} 0 \big) \big]$ and $\lim_{\mu \downarrow 0} \big[ \big(\theta_j^\mu \nabla g_j (x,y) \phantom{-} \kappa_j^\mu \phantom{-} 0 \phantom{-} 0 \big) \big]$. This leads to $\lim_{\mu \downarrow 0} dist (\nabla  \Upsilon^\lambda_\mu (z), \partial_C  \Upsilon^\lambda(z)) = 0$ as $\partial_C \Upsilon^\lambda(z)$ has the same form in \eqref{compactJac} with corresponding adjustments to $\zeta_j, \rho_j, \sigma_j, \delta_j, \iota_j$ and $\eta_j$ for the cases when strict complementarity does not hold.
\end{proof}

\section{Numerical results}\label{SecNumExperGaussNewt}

The focus of our experiments in this section will be on the smoothing system \eqref{bilevncp2222}, where we set $\mu:=10^{-11}$ constant throughout all iterations. Based on this system, we test and compare the Gauss-Newton method, Pseudo-Newton method, and the Matlab built-in method called \emph{fsolve}  (with Levenberg-Marquardt chosen as option). The examples used for the experiments are from the Bilevel Optimization LIBrary of Test Problems (BOLIB) \cite{bolib17}, which contains 124 nonlinear examples. The experiments are run in MATLAB, version R2016b, on MACI64. Here, we present a summary of the results obtained; more details for each example are reported in \cite{gnewtnumer18}.

For Step 0 of Algorithm \ref{algorithm 3s} and the corresponding smoothed Pseudo-Newton algorithm, we set the tolerance to $\epsilon:=10^{-5}$ (see Subsection \ref{Variation} for a justification) and the maximum number of iterations to be $K:=1000$. As for stopping criterion of $fsolve$, the tolerance is set to $10^{-5}$ as well.
For the numerical implementation we calculate the direction $d^k$ by solving \eqref{Gauss-Newtonsdireq} with Gaussian elimination.
Five different values of the penalty parameter are used for all the experiments; i.e., $\lambda \in \{100,10,1,0.1,0.01\}$, see \cite{gnewtnumer18} for details of the values of each solution for a selection of $\lambda$.
The motivation of using different values of $\lambda$ comes from the idea of not over-penalizing and not under-penalizing deviation of lower-level objective values from the minimum, as bigger (resp. smaller) values of $\lambda$ seem to perform better for small (resp. big) values of lower-level objective.
After running the experiments for all values of $\lambda\in \{100,10,1,0.1,0.01\}$, the best one is chosen (see Table 1 in \cite{gnewtnumer18}), i.e. for which the best feasible solution is produced for the particular problem by the tested algorithms.
Later in this section we present the comparison of the performance of the algorithms for the best value of  $\lambda$.
The experiments have shown that the algorithms perform much better if the starting point $(x^0,y^0)$ is feasible. As a default setup, we start with $x^0=1_n$ and $y^0=1_m$. If the default starting point does not satisfy at least one constraint, we choose a feasible starting point; see \cite{gnewtnumer18}. Subsequently, the Lagrange multipliers are initialised at $u^0 = \max\,\left\{0.01,\;-g(x^0,y^0)\right\}$,  $v^0=\max\,\left\{0.01,\;-G(x,y)\right\}$, and $u^0 = w^0$.

\subsection{Performance profiles}

Performance profiles are widely used to compare characteristics of different methods.
In this section we consider performance profiles, where $t_{p,s}$ denotes the CPU time to solve problem $p$  by algorithm $s$.
If the optimal solution of problem $p$ is known but it cannot be solved by algorithm $s$ (i.e., upper-level objective function error $>60\%$), we set $t_{p,s} := \infty$. We then define the \emph{performance ratio} by
$$
r_{p,s} := \frac{t_{p,s}}{\min \{ t_{p,s} : s \in S \}},
$$
where $S$ is the set of solvers.
Performance ratio is the ratio of how algorithm $s$ performed to solve problem $p$ compared to the performance of the best performed algorithm from the set $S$.
The \emph{performance profile} can be defined as the cumulative distribution function of the performance ratio:
$$\rho_s (\tau) := \frac{\big| \big\{ p\in P : r_{p,s} \leq \tau \big\} \big|}{n_p},$$
where $P$ is the set of problems.
Performance profile, $\rho_s (\tau)$, is counting the number of examples for which the performance ratio of the algorithm $s$ is better (smaller) than $\tau$.
The perfomance profile $\rho_s : \Re \rightarrow [0,1]$ is non-decreasing function, where the value of $\rho_s (1)$ shows the fraction of the problems for which solver $s$ was the best.

\begin{figure}[H]
\centering
\includegraphics[width=15cm]{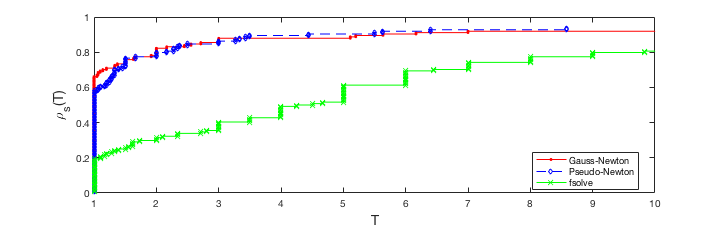}
\caption{Performance profiles of the methods for 124 problems}
\label{PerfProf}
\end{figure}

Comparing line-graphs of the performance profiles, the higher position of the graph means the better performance of the algorithms. The value on the y-axis shows the fraction of examples for which performance ratio is better than $T$ (presented on the x-axis).
As we set $t_{p,s} := \infty$ for the cases when we do not solve the problem, the algorithms would not go much over $90\%$ mark as for the rest of the problem $\rho_s(T) = \infty$.
Figure \ref{PerfProf} clearly shows that Gauss-Newton and Pseudo-Newton showed better performance than fsolve.
Since the variable for the comparison was CPU time, based on the values of $\rho_s(1)$, we can claim that Gauss-Newton was the fastest algorithm for about $70 \%$ of the problems, Pseudo-Newton for about $60\%$ of the problems and fsolve was the quickest for about $20 \%$ of the problems.
From the graph, one can also see that Gauss-Newton and Pseudo-Newton methods have
$\rho_s(2) = 80 \%$, while fsolve only has the value $\rho_s (2) = 30\%$, meaning that fsolve was more than twice worse than the best algorithm for $70\%$ of the problems.
Approaching $T=6$, Gauss-Newton and Pseudo-Newton getting performance ratio close to $\rho_s (T) = 90 \%$, where fsolve has the value $\rho_s (6) \approx 65\%$.
This clearly shows that Gauss-Newton and Pseudo-Newton methods show quite similar performance in terms of CPU time.
Both these algorithms clearly outperform fsolve for solving the set of test problems in terms of the measure of performance profiles.

\subsection{Feasibility check}
Considering the structure of the feasible set of problem \eqref{initialvalfuncform0}, it is critical to check whether the points computed by our algorithms satisfy the value function constraint $f(x,y)\leq \varphi(x)$. If the lower-level problem is convex in $y$ and a constraint qualification (e.g., the MFCQ) holds at a solution point generated by our algorithms, then this point will automatically satisfy the value function constraint, provided \eqref{kktbilev42} and \eqref{kktbilev62} hold. Note that the latter conditions are incorporated in the stopping criterion of Algorithm \ref{algorithm 3s}. To check whether the points obtained are feasible, we first identify the BOLIB examples, where the lower-level problem is convex w.r.t. $y$; see the summary of these checks in Table \ref{ConvTable}. It turns out that significant part of test examples have linear lower-level constraints. For these examples, a constraint qualification (CQ) is automatically satisfied. 
\begin{table}[H]
\begin{small}
\begin{center}
\begin{tabular}{l|l|l}
 \hline
 $f(\cdot,y)$ & $g_i(\cdot,y), i=1, \ldots, p$ & \textbf{Total count}  \\
  \hline
  Convex & Convex (linear) & 55 \\
  Convex & Convex (nonlinear) & 14 \\
  Convex & No constraints & 5 \\
  \hline
  Convex & Convex (with CQ satisfied) & 74 \\
  \hline
\end{tabular}
\end{center}
\caption{Convexity of the lower-level functions}\label{ConvTable}
\end{small}
\end{table}
There are 14 problems, where convex $f(\cdot,y)$ and $g_i(\cdot,y)$ with $i=1, \ldots, p$, are convex, but the constraints are not all linear w.r.t. $y$. For these examples the MFCQ has been shown to hold at the point computed by our algorithms. The rest of the problems have non-convex lower-level objective or some of the lower-level constraints being nonconvex. For these examples, we compare the obtained solutions  with the known ones from the literature.
Let $f_A$ stand for $f(\bar{x},\bar{y})$ obtained by one of the tested algorithms and $f_K$ to be the known optimal value of lower-level objective function.
In the graph below we have the lower-level relative error, $(f_A - f_K) / (1+|f_K|)$, on the y-axis.
The x-axis starts from $25th$ example and the error is plotted in increasing order.

\begin{figure}[H]
\centering
\includegraphics[scale=0.55]{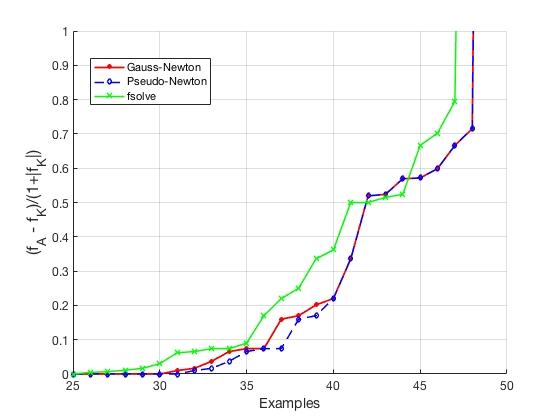}
\caption{Lower-level optimality check for examples with a nonconvex lower-level problem}
\label{Lowerlevhist}
\end{figure}
From the figure above we can see that for 30 problems the relative error of lower-level objective is negligible ($<5\%)$ for all three methods.
Almost for all of the remaining 19 examples Gauss-Newton and Pseudo-Newton have smaller errors than fsolve and Pseudo-Newton seems to have slightly smaller errors than Gauss-Newton method for some of the examples.
We have seen that convexity and a CQ hold for the lower-level hold for 74 test examples.
We consider solutions for these problems to be feasible for the lower-level problem.
Taking satisfying feasibility error to be $<20 \%$, we claim that feasibility is satisfied for 113 (91.13\%) problems for Gauss-Newton and Pseudo-Newton methods, and for 110 (88.71\%) problems for fsolve.

\subsection{Accuracy of the upper-level objective function}
Here, we compare the values of the upper-level objective functions at points computed by the algorithms; i.e., Gauss-Newton and
Pseudo-Newton algorithms, and fsolve. For this comparison purpose, we focus our attention only on  116 BOLIB examples \cite{bolib17}, as solutions are not known for six of them and the Gauss-Newton algorithm diverges for \emph{NieEtal2017e}, possibly due the singularity of the matrix $\nabla \Upsilon^\lambda_\mu (z)^T \nabla \Upsilon^\lambda_\mu (z)$; see \cite{gnewtnumer18} for more details. To proceed, let $\bar{F}_A$ be the value of upper-level objective function at the point $(\bar{x},\bar{y})$ obtained by one of the algorithms (Gauss-Newton, Pseudo-Newton or fsolve) and $\bar{F}_K$  the value of this function at the known best solution point reported in the literature (see corresponding references in \cite{bolib17}).
The comparison is shown in Figure \ref{Upperscaleshist}, where we have the relative error $(\bar{F}_A - \bar{F}_K) / (1+|\bar{F}_K|)$ on the $y-axis$ and number of examples on the x-axis, starting from $85th$ example. The graph is plotted in the order of increasing error.

\begin{figure}[H]
\centering
\includegraphics[scale=0.55]{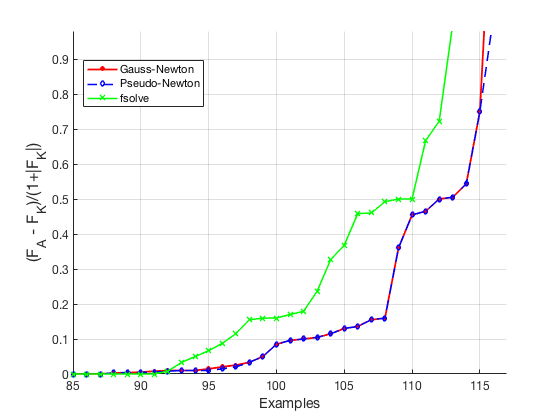}
\caption{Comparison of upper-level objective values for examples with known solutions}
\label{Upperscaleshist}
\end{figure}
From Figure \ref{Upperscaleshist}, we can see that most of the known best values of upper-level objective functions were recovered by all the methods, as the relative error is close to zero. Precisely, for 93 of the tested problems, the upper-level objective function error is negligible (i.e., less than $5 \%$) for the solutions obtained by all the three methods. For the remaining 23 examples, it is clear that the errors resulting from fsolve are much higher than the ones from the Gauss-Newton and Pseudo-Newton methods.
It is worth noting that algorithms perform fairly well for most of the problems.
With the accuracy error of $\leq20\%$ our algorithms recovered solutions for $92.31\%$ of the problems, while fsolve recovered only $88.03\%$ of the solutions.

\subsection{Variation of the tolerance in the stopping criterion}\label{Variation}
We are now going to evaluate the performance of Algorithm \ref{algorithm 3s} as we relax the tolerance in the stopping criterion.
Precisely, we set $\epsilon := 10^{-8}$ (as opposed to $\epsilon := 10^{-5}$ used so far) and it turns out that for most of the examples, this is not achievable. Hence, the algorithms then stop after the maximum number of iterations or if the gap between improvement from step to step is too small.


\begin{figure}[H]
\centering
\hspace*{-0.5cm}
\includegraphics[scale=0.7]{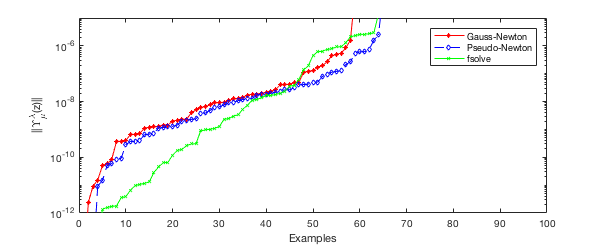}
\caption{Performance of the methods in terms of solving $\Upsilon^\lambda_\mu (z)=0$} \label{SystemToler}
\end{figure}
The values of $|| \Upsilon^\lambda_\mu (z)||$ produced by the algorithms are presented in increasing order on the y-axis in Figure \ref{SystemToler}. We can see that fsolve performs slightly better for 40 examples, where we have $||\Upsilon^\lambda_\mu (z)|| \leq 10^{-8}$. For those examples, fsolve recovered solution with better tolerance than Gauss-Newton and Pseudo-Newton algorithms.
This shows that whenever we are able to solve a problem almost exactly, fsolve's stopping criteria is more strict and obtains slightly better values of the system. This can be explained by an additional stopping criteria that we use for Gauss-Newton and Pseudo-Newton if the improvement between the steps of the algorithms gets too small.
This shows that whenever we are able to solve the system $\Upsilon^\lambda_\mu (z) = 0$
almost exactly, fsolve's stopping criteria is a bit less strict and iterations keep going to produce values of $||\Upsilon^\lambda_\mu (z)||$ that are closer to $0$ than for the other two methods. The explanation could be that due to an additional stopping criteria of Gauss-Newton and Pseudo-Newton 
the algorithms stop earlier once significant improvement of the solution is not observed from step to step.
If we now look at the performance of algorithms between tolerances of $10^{-8}$ and $10^{-6}$, the Pseudo-Newton method shows better performance than the other algorithms and fsolve is the weakest of the three algorithms. This means that if we want to solve a problem with the tolerance of $10^{-6}$ or better, the Pseudo-Newton algorithm is more likely to recover solutions than the other two methods.
The other important observation from Figure \ref{SystemToler} is that choosing $\epsilon := 10^{-5}$ is the most sensible tolerance as better tolerance only allow  about $50 \%$ of the examples to be solved (i.e., just over 60 examples as we can see from the graph).

\subsection{Checking assumption on $\lambda$}\label{SecLamassum}
Considering the importance of the requirement that  $\lambda < \frac{\kappa_j^\mu}{\theta_j^\mu} \frac{\tau_j^\mu}{\gamma_j^\mu}$ for $j=1, \ldots, p$ in Theorem \ref{GaussNewtLinIndeps}, let us analyse its behaviour at the solution point generated with Algorithm \ref{algorithm 3s} for each value of $\lambda \in \{100, \, 10, \, 1, \, 0.1, \, 0.01\}$. To simplify the analysis, we introduce
\[
c^\mu (\lambda) := \min_{j=1, \ldots, p}~\frac{\kappa_j^\mu}{\theta_j^\mu} \frac{\tau_j^\mu}{\gamma_j^\mu}.
\]
It suffices to check that $\lambda < c^\mu (\lambda)$.
We set $\lambda - c^\mu (\lambda) := 100$ if 
 the difference is undefined, while discarding problems where $g(x,y)$ is not present.
Note that the assumption on $\lambda$ is not necessary in Theorem \ref{GaussNewtLinIndeps} for problems with no lower-level constraint. Let us also introduce the notions of the best $\lambda$ and optimal $\lambda$, where
by the best $\lambda$ we mean the values of $\lambda$ for which $\lambda - c^\mu (\lambda)$ is the smallest and by the optimal we mean the values of $\lambda$ that were the best to obtain the solution according to \cite{gnewtnumer18}.
In the figure below we present the difference $\lambda - c^\mu (\lambda)$ on the y-axis and number of the example on the x-axis, following ascending order w.r.t. the values on the y-axis.

\begin{figure}[H]
\centering
\hspace*{-0.5cm}
\includegraphics[scale=0.6]{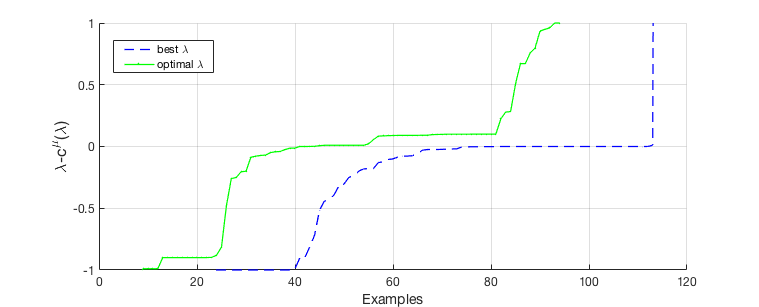}
\caption{Checking assumption $\lambda<c^\mu (\lambda)$ for best and optimal values of $\lambda$}
\label{lamassumfigure42}
\end{figure}
Clearly,  condition $\lambda < c^\mu (\lambda)$ holds for the values of $\lambda - c^\mu (\lambda)$ lying below the $x$-axis.
From Figure \ref{lamassumfigure42} we can see that the assumption holds for 42 (out of 116) problems for the optimal $\lambda$. This means that the condition can hold for many examples. But, obviously, as it is not a necessary condition, our Algorithm \ref{algorithm 3s} still converges for many other problems, where the condition is not necessarily satisfied.
For the best values of $\lambda$, condition $\lambda < c^\mu (\lambda)$ holds for 101 problems. Hence, showing that for most of the examples, there is at least one value of $\lambda \in \{100, \, 10, \, 1, \, 0.1, \, 0.01\}$ for which the condition is satisfied.

\subsection{Final comments}
In this paper, a class of the LLVF-based optimality conditions for bilevel optimization problems has been reformulated as a system of equations using the Fischer-Burmeister function.
It was shown that the Gauss-Newton method can be well-defined and the framework for convergence is provided. We have tested the method and its smoothed version numerically, alongside with Newton method with Moore-Penrose pseudo inverse. The comparison of the obtained solutions with known best ones showed that the methods are appropriate to be used for bilevel programs, recovering optimal solutions (when known) for most of the tested problems. It is worth mentioning that whenever  $\nabla \Upsilon^\lambda_\mu (z)^T \nabla\Upsilon^\lambda_\mu (z)$ is not ill-conditioned throughout all iterations, Gauss-Newton and Pseudo-Newton methods produced the same results as expected.
More interestingly, for the 38 problems for which Gauss-Newton could not be implemented due to singularity of the direction matrix for one or more values of $\lambda$ (see \cite{gnewtnumer18}), our conjecture that Pseudo-Newton would produce reasonable solutions for these cases worked well for 14 problems (e.g. \emph{'CalamaiVicente1994c', 'DempeDutta2012b', 'DempeFranke2011a'} in \cite{gnewtnumer18}) and failed for the remaining 24 examples (e.g.  \emph{'Bard1988c', 'Colson2002BIPA3', 'DempeDutta2012a'} in \cite{gnewtnumer18}). Nevertheless, we can say that the Pseudo-Newton method is indeed slightly more robust compared to the Gauss-Newton method. This, together with slightly better feasibility (seen  in Figure \ref{Lowerlevhist}) and very similar computation time, allows us to say that Pseudo-Newton method is indeed slightly more robust than Gauss-Newton method.

All the three methods (Gauss-Newton, Pseudo-Newton, and fsolve) show very fast performance with average CPU time of less than $0.5$ seconds for any of the five values of $\lambda$ used. However, the Gauss-Newton and Pseudo-Newton methods are more efficient at recovering solutions for the 124 BOLIB test problems from \cite{bolib17}. Apart from recovering more solutions they also showed better CPU time than fsolve as seen from performance profiles. Our methods also showed to produce more feasible solutions than fsolve in the sense of the feasibility for the lower-level problem.

\end{document}